\newtheorem{hypothesis}{Hypothesis}
\newtheorem{rmk}{Remark}
\begin{document}
	
	 \newcommand{\be}{\begin{equation}}
	 \newcommand{\ee}{\end{equation}}
	 \newcommand{\bt}{\beta}
	 \newcommand{\al}{\alpha}
	 \newcommand{\laa}{\lambda_\alpha}
	 \newcommand{\lab}{\lambda_\beta}
	 \newcommand{\no}{|\Omega|}
	 \newcommand{\nd}{|D|}
	 \newcommand{\Om}{\Omega}
	 \newcommand{\h}{H^1_0(\Omega)}
	 \newcommand{\lt}{L^2(\Omega)}
	 \newcommand{\la}{\lambda}
	 \newcommand{\ro}{\varrho}
	 \newcommand{\cd}{\chi_{D}}
	 \newcommand{\cdc}{\chi_{D^c}}
	 \newtheorem{thm}{Theorem}[section]
	 \newtheorem{cor}[thm]{Corollary}
	 \newtheorem{lem}[thm]{Lemma}
	 \newtheorem{prop}[thm]{Proposition}
	 \theoremstyle{definition}
	 \newtheorem{defn}{Definition}[section]
	 \newtheorem{exam}{Example}[section]
	 \theoremstyle{remark}
	 \newtheorem{rem}{Remark}[section]
	 \numberwithin{equation}{section}
	 \renewcommand{\theequation}{\thesection.\arabic{equation}}
	 \numberwithin{equation}{section}
	 %
	 %
	 %
	\title[ The first eigenvalue   of a nonlinear elliptic system]{ The first eigenvalue and eigenfunction  of a nonlinear elliptic system}
	\author[Bozorgnia, Mohammadi, Vejchodsk\'y]{Farid Bozorgnia, Seyyed Abbas  Mohammadi,  Tom\'a\v{s} Vejchodsk\'y}
	 \address{Farid Bozorgnia, CAMGSD, Instituto Superior T\'{e}cnico, Lisbon, Portugal.}  \email{bozorg@math.ist.utl.pt }
	
	 \address{Seyyed Abbas  Mohammadi, Department of Mathematics, College of Sciences,
	 	Yasouj University, Yasouj, Iran, 75918-74934}
	 \email{mohammadi@yu.ac.ir}
	
	 \address{Tom\'a\v{s} Vejchodsk\'y, Institute of Mathematics, Czech Academy of Sciences,
	 	\v{Z}itn\'a 25, CZ-115\,67, Prague~1, Czech Republic}
	    \email{vejchod@math.cas.cz}

	 \date{\today}

	 \begin{abstract}
	 	In this paper, we study the first eigenvalue  of a  nonlinear elliptic system involving  $p$-Laplacian as the differential
	 	operator. The principal eigenvalue of the system and the corresponding eigenfunction  are investigated both analytically and numerically. An alternative   proof  to show   the simplicity of the first eigenvalue is given. In addition,  an upper and lower bounds of the first eigenvalue  are provided. Then, a  numerical algorithm is developed  to  approximate  the principal eigenvalue.  This  algorithm generates a decreasing  sequence of positive numbers
	 	and various examples numerically indicate its     convergence.  Further, the algorithm
	 	is generalized to a class of gradient quasilinear elliptic systems.
	 \end{abstract}

	 \maketitle

	 \noindent
	 \textbf{Keywords}: nonlinear elliptic system, $p$-Laplacian, eigenvalue problem, simplicity, numerical approximation\\
	 \textbf{2010 MSC}:  35P30, 34L15, 34L16,  35J92
	 \section{Introduction}

	 Nonlinear elliptic eigenvalue problems  form a class of important  problems in the theory and applications of partial differential equations and they have been extensively studied in the past decades by many researchers. In particular,  problems involving $p$-Laplace operator are of great interest and importance from both  the
	 theoretical and applied aspects \cite{anane,  le,lind1,lind2, lind3, moh1,moh2}.
	
	In this paper, we consider a nonlinear elliptic system involving two nonlinear eigenvalue problems where	 the differential operators are  two  $p$-Laplace operators. The system is weakly coupled such  that the two solution components interact through the source terms only.
	
	 Let $\Omega  \subset  \mathbb{R}^N$ be  a  bounded   domain  with smooth  boundary.
	 	 Our aim is to study, both analytically and
	 	 numerically,  the principal eigenvalue denoted by $\lambda(p, q)$  and the corresponding
	 first eigenfunction $(u,v)$   of the following  elliptic eigenvalue system

	 \begin{equation}\label{mainsys}
	 \left\{
	 \begin{array}{lrl}
	 - \Delta_{p} u =\lambda  |u|^{\alpha-1}  |v|^{\beta-1} v  &    \text{in } \Omega, \\
	 - \Delta_{q} v =\lambda  |u|^{\alpha-1} |v|^{\beta-1} u  &   \text{in } \Omega,\\
	 u=v=0 &   \text{on } \partial\Omega,
	 \end{array}
	 \right.
	 \end{equation}
	where $\Delta_{p} u=  \text{div}(| \nabla u|^{p-2} \nabla u)$ is the $p$-Laplacian and $p, q > 1$  and $\alpha, \beta\geq 1$   are real numbers satisfying
	  \begin{equation}\label{alfa}
	 \frac{\alpha}{p}+ \frac{\beta}{q}=1.
	  \end{equation}

	 The  first eigenvalue $ \lambda(p,q)  $  of system \eqref{mainsys}  is defined  as the least positive  parameter $ \lambda $  for
	 which  system  \eqref{mainsys} has a solution $(u,v)$   in
	 $ {W^{1,p}_{0}(\Omega) \times W^{1,q}_0(\Omega)}$  such that  both    $ u \neq 0$ and $v\neq 0$. This  eigenvalue problem  has a variational form which will be explained in the next section.
	
	 The elliptic system  \eqref{mainsys} has been studied in \cite{khalil}  and some close variants of it have  been studied in several works, let us mention  for example \cite{Bob1, Bob2,boccardo,bonder,napoli}.
	In particular, these papers investigate the first eigenvalue, the corresponding eigenfunction, their  existence, uniqueness, positivity, and isolation in bounded or unbounded domains, with various boundary conditions (see, e.g. \cite{bonder} and the references therein).
	 The coupled system    \eqref{mainsys}  arises in different fields
	 of application. For instance,  the case   $ p > 2$   appears in the study of
	 non-Newtonian fluids, pseudoplastics  and the case  $1 < p <2$  in reaction-diffusion problems,
	 flows through porous media, nonlinear elasticity, and glaciology for $p = \frac{4}{3}$, see  \cite{khalil}.

In \cite{NZ} the author studies properties of the positive principal eigenvalue for the following degenerate elliptic system
\begin{equation}\label{Nikolassys}
\left\{
  \begin{array}{lrl}
    - \textrm{div}(\nu_{1}(x)|\nabla u|^{p-2} \nabla u)  =\lambda \, a(x) |u|^{p-2}  u + \lambda \, b(x)  |u|^{\alpha-1} |v|^{\beta-1} v  &    \text{in } \Omega,
    \\
    - \textrm{div}(\nu_{2}(x)|\nabla v|^{p-2} \nabla v)  =  \lambda \, d(x) |v|^{q-2}  v  +     \lambda \, b(x)  |u|^{\alpha-1} |v|^{\beta-1} u  &   \text{in } \Omega,
    \\
    u=v=0 &   \text{on } \partial\Omega,
  \end{array}
\right.
\end{equation}
where  $ p, q > 1 $  and $\alpha, \beta\geq 1$ satisfy \eqref{alfa}.
Note that choosing $ \nu_{1}(x)=  \nu_{2}(x) =1$, $a(x)= d(x)  =0$, and $b(x)=1$ in system  (\ref{Nikolassys}), we obtain system (\ref{mainsys}).
The main result \cite[Theorem 1.1]{NZ} applied to this special case provides the simplicity and isolation of the first eigenvalue of (\ref{mainsys}) and positivity of  corresponding   first eigenfunction. More precisely, it states that
the system (\ref{mainsys}) admits a positive principal eigenvalue $\lambda_{1}$, satisfying
\[
\lambda_{1}= \underset{(u,v)\in L}{\textrm{inf}}  \left[\frac{\alpha}{p} \int_{\Omega} |\nabla u|^{p} \, dx  + \frac{ \beta}{q} \int_{\Omega} |\nabla v|^{q}   \, dx    \right],
\]
where the set $L$ is
\[
L=\left\{(u,v)  \in W^{1,p}_{0}(\Omega)  \times W^{1,q}_{0}(\Omega):  \,  \int_\Omega |u|^{\alpha} |v|^{\beta}\,  dx  =1\right\}.
\]
Furthermore, each component of  the associated normalized  eigenfunction  $(u_1, v_1)$  is nonnegative.
	
We address some analytical aspects of the first eigenvalue and the corresponding eigenfunction of system (\ref{mainsys}) in this paper.
We provide a different proof of the simplicity of $\lambda(p,q)$ which has been first  addressed   in   \cite{khalil}. Moreover,  it is established that system \eqref{mainsys} reduced to the $p$-Laplacian eigenvalue problem when $p=q$. Next, we will derive a lower and upper estimate for the principal eigenvalue of system \eqref{mainsys}.
	
	   Deriving sharp  bounds for eigenvalues of  elliptic systems  is a challenging   problem which  has been investigated by several authors, e.g. \cite{boccardo,bonder,napoli}.

	 In general, the value of  the  first eigenvalue of \eqref{mainsys} is not  explicitly known even for one dimensional problem; but it is important to determine it due to numerous physical applications. However, for the specific case $p=q$, the system is reduced to  the scalar $p$-Laplace eigenvalue problem and  the spectrum is known exactly in dimension one.     	 To this end, we develop a numerical algorithm computing an approximation of the principal eigenvalue.
          The algorithm is robust and efficient for various domains with different values of parameters $p, q, \alpha$ and $\beta$.
	  Moreover, we explain how to generalize it for a large class of  quasilinear elliptic systems.
	  We prove its convergence in the case $ p=q $  where the system  reduces  to   the $p$-Laplace eigenvalue problem.
	  	  	
	 It is worth to mention that the corresponding scalar equation, i.e., the $p$-Laplace  eigenvalue problem,  has been studied intensively from both the analytical and numerical point of view \cite{farid,horak,le,lind1,lind2,lind3}.

	  The paper is organized as follows. In section~2, we  provide  important definitions, recall needed mathematical background and present preliminary results.  In section~3, we  provide an alternative proof of the simplicity of $\lambda(p,q)$ which has been first addressed in   \cite{khalil}.
Further, lower and upper  estimates  for the principal eigenvalue will be obtained in this section. Section~4 describes the numerical algorithm. In section~5, we
provide several numerical examples illustrating the efficiency and applicability of this method.

	 \section{ Mathematical Background}\label{mathback}
	 In this section we provide  the necessary    mathematical background.
	 Let us at first address the scalar $p$-Laplace    eigenvalue problem.

	 The first eigenvalue of  the $p$-Laplace operator in $W^{1,p}_{0}(\Omega),$ denoted by $\lambda (p)$   for $ 1\leq p <\infty $ is   given by
	 \begin{equation}\label{lambdap}
	 \lambda (p)=\underset{  u\neq 0}{\underset{u \in W^{1,p}_{0}(\Omega)}{ \min}} \frac{\int_{\Omega} |\nabla u(x)|^{p} dx }{\int_{\Omega}| u(x)|^{p}dx}.
	 \end{equation}
	
	 The corresponding Euler-Lagrange equation is
	 \begin{equation}\label{plapeq}
	 \text{div}(| \nabla u|^{p-2} \nabla u) + \lambda(p) |u|^{p-2} u = 0.
	 \end{equation}
	
	 Equation  \eqref{plapeq} is interpreted in the usual weak form with test-functions in $W^{1,p}_{0}(\Omega)$:
	\begin{defn}\label{piegen}
	 	A  nonzero function  $u\in W^{1, p}_{0}(\Omega) \cap C(\overline{\Omega}),$ is called a $p$-eigenfunction, if there exists $\lambda(p) \in \mathbb{R}$
	 	such that
	 	\[
	 	\int_{\Omega}|\nabla u|^{p-2} \nabla u \cdot \nabla\phi \, dx=\lambda(p) \int_{\Omega}| u |^{p-2}  u \, \phi \, dx,  \quad \forall \, \phi \in W^{1,
	 		p}_{0}(\Omega).
	 	\]
	\end{defn}
	 The associated number $\lambda(p)$ is called a $p$-eigenvalue.
		 For every $ 1<p< \infty,$ the first (i.e. the smallest) eigenvalue is simple and isolated and the corresponding eigenfunction is a bounded continuous function on $\overline{\Omega}$ which does not change sign \cite{lind1}.

There are two important limit cases;  as $p$ tends to one and infinity.  We   recall   the  result  of    \cite{KF},  which says  that   the  first eigenvalue $\lambda(p)$
converges to the Cheeger constant  $h(\Omega)$  as  $p  \rightarrow 1$.   Furthermore, the associated eigenfunction
converges to the characteristic function $\chi_{C_\Omega}$ of  the  Cheeger set $C_\Omega$, i.e.,  the subset of $\Omega$
 which
minimizes the ratio  $|\partial D| / |D|$  among all simply connected  $ D \subset \Omega$.

  The first eigenvalue $\Lambda_{\infty}$ of the infinity Laplacian corresponds to  the reciprocal value of the radius of the  largest ball that can be inscribed in the domain $\Omega$. More precisely
\[
\Lambda_{\infty} =\frac{1}{\underset  {x \in \Omega}  {\max }\, \text{dist}(x, \partial \Omega)}    =\underset { p \rightarrow \infty }  {\lim}  \lambda(p)^{\frac{1}{p}},
\]
where $\lambda(p)$   is  the first eigenvalue of the $p$-Laplace operator, see \cite{lind1}.
 In addition,  if the   domain is a ball,  then
  the infinity eigenfunction is the distance function $d(x)= \text{dist} (x, \partial \Omega)$.   Obviously,  for a unite ball centered at the origin $d(x)=  1-|x|$  and $ \Lambda_\infty=1$.

Now, we return to the nonlinear system \eqref{mainsys}.
	\begin{defn}
	 	The  first eigenvalue  $\lambda(p,q)$  of \eqref{mainsys}   is defined  as the least  positive  parameter $ \lambda$ for
	 	which   system  \eqref{mainsys}   has  a solution $(u,v)$  in the product Sobolev space $ W^{1,p}_{0}(\Omega) \times W^{1,q}_{0}(\Omega) $    such that both $ u\neq 0$  and $ v\neq  0$.
	\end{defn}
	
	 Here by a solution to \eqref{mainsys} we mean a pair $(u,v)$ in $ W^{1,p}_{0}(\Omega) \times W^{1,q}_{0}(\Omega) $ such that
	 \begin{align}\label{varformofsol}
	  \int_{\Omega}|\nabla u|^{p-2} \nabla u \cdot \nabla\phi \, dx  + 	
	    \int_{\Omega}|\nabla v|^{q-2} \nabla u \cdot \nabla\psi  \, dx = \nonumber
	    \\
	    \lambda\left( \int_{\Omega}|u|^{\alpha-1}  |v|^{\beta-1} v \phi \, dx + \int_{\Omega}|u|^{\alpha-1}  |v|^{\beta-1} u \psi \, dx\right),
	  	 \end{align}
		 \[  \quad \forall \, (\phi, \psi )  \in
	 W^{1,p}_{0}(\Omega) \times  W^{1,q}_{0}(\Omega).
	 \]
      Defining the Rayleigh quotient
      $$\mathcal{R}(u,v)= \frac{\frac{\alpha}{p}\int_{\Omega} |\nabla u(x)|^{p} dx  + \frac{\beta}{q}\int_{\Omega} |\nabla v(x)|^{q} dx,}{\int_{\Omega} |u(x)|^{\alpha-1}|v (x)|^{\beta-1} u(x) v(x) dx},$$
	 the principal eigenvalue  $\lambda(p, q)$   can be variationally characterized by  minimizing the functional $\mathcal{R}$
	 over the set
	 \[
	 \mathcal{A}=\left\{(u, v) \in  W^{1,p}_{0}(\Omega) \times W^{1,q}_{0}(\Omega): \int_{\Omega} |u(x)|^{\alpha-1}|v (x)|^{\beta-1} u(x) v(x) dx  > 0 \right\}.
	 \]
	Thus
	 \begin{equation}\label{lamvar}
	 \lambda(p,q) =\min \left\{ \mathcal{R}(u,v), (u,v) \in \mathcal{A}  \right\}
	 \end{equation}
	and the minimizer is the pair of eigenfunctions $(u,v)$, see \cite{khalil}.
	If  $(u,v)$ is the pair of eigenfunctions corresponding to the first eigenvalue of \eqref{mainsys}, then
$$\mathcal{R}(|u|,|v|) \leq  \mathcal{R}(u,v),$$
because
	 $$\int_\Omega |u|^{\alpha-1} |v|^{\beta-1} u v dx   \leq   \int_\Omega |u|^{\alpha-1} |v|^{\beta-1} |u| |v| dx.$$

	Consequently,  in view of variational formulation \eqref{lamvar}, we deduce that
	 if $(u,v)$ is a minimizer in \eqref{lamvar}, so is $(|u|,|v|)$. Therefore we may assume that $u$ and $v$ are nonnegative. In addition, if the pair $(u,v)$ is a nonnegative  weak solution to \eqref{mainsys}, then  $u,v>0$ in $\Omega$ due to the maximum principle  of V\`{a}zquez \cite{vaz}. As   mentioned in introduction, to see about simplicity of first eigenvalue and positivity of corresponding eigenfunction for general system (\ref{Nikolassys}) we refer to \cite{NZ}.
	%
	
	 		  The existence of a principal eigenvalue, simplicity and the isolation of the first eigenvalue  have been proved  for \eqref{mainsys} and its variants    in \cite{boccardo,bonder, khalil, Pezzo, napoli}. Let us recall that the first eigenvalue $\lambda(p,q)$   of \eqref{mainsys} is simple if for any two pairs of  corresponding eigenfunctions  $(u, v)$   and $( \phi, \psi )$ there exist real numbers $k_1$ and $k_2$ such that $u=k_1\phi$ and $v = k_2\psi$.

	 	\section{Analytical results}
	 	
	 	In this section we examine certain   analytical aspects of system \eqref{mainsys}.	 Simplicity of the principal eigenvalue is one of its main features and it has been investigated in \cite{khalil}.  Here we  provide an alternative proof which is more straightforward and based on the proof given by Belloni and Kahwol \cite{BK}  establishing  the simplicity of the principal eigenvalue of  scalar problem  \eqref{plapeq}.
	
	 \begin{thm} The first eigenvalue of system  \eqref{mainsys}  is  simple.
	 \end{thm}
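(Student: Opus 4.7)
The plan is to adapt the hidden-convexity method of Belloni and Kahwol \cite{BK} from the scalar $p$-Laplacian to the coupled setting. Let $(u,v)$ and $(\phi,\psi)$ be two eigenpairs corresponding to $\lambda(p,q)$. By the discussion preceding Section~3, each may be replaced by its componentwise modulus and, via V\`{a}zquez's strong maximum principle, taken strictly positive in $\Omega$; I further normalize so that $\int_\Omega u^\alpha v^\beta\,dx = \int_\Omega \phi^\alpha \psi^\beta\,dx = 1$. For $t \in [0,1]$ I would introduce the interpolated competitor
\[
\sigma_t = \bigl(t u^p + (1-t)\phi^p\bigr)^{1/p}, \qquad \tau_t = \bigl(t v^q + (1-t)\psi^q\bigr)^{1/q},
\]
which will be plugged into the variational characterization \eqref{lamvar}.

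For the gradient part I invoke the pointwise Belloni-Kahwol inequality $|\nabla \sigma_t|^p \leq t|\nabla u|^p + (1-t)|\nabla \phi|^p$ a.e.\ in $\Omega$ (with equality only when $u$ is a constant multiple of $\phi$) and its analogue with exponent $q$ for $\tau_t$. For the coupling term, the constraint $\alpha/p + \beta/q = 1$ from \eqref{alfa} is precisely what makes $(X,Y)\mapsto X^{\alpha/p}Y^{\beta/q}$ concave and positively homogeneous of degree one on $[0,\infty)^2$; applying this Cobb--Douglas concavity pointwise to $(X,Y) = (\sigma_t^p,\tau_t^q)$ yields
\[
\sigma_t^\alpha \tau_t^\beta \;\geq\; t\, u^\alpha v^\beta + (1-t)\, \phi^\alpha \psi^\beta.
\]
After integration, the numerator of $\mathcal{R}(\sigma_t,\tau_t)$ is at most $t\lambda(p,q)+(1-t)\lambda(p,q)=\lambda(p,q)$ while its denominator is at least $1$, so $\mathcal{R}(\sigma_t,\tau_t) \leq \lambda(p,q)$.

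Since $\lambda(p,q)$ is the minimum of $\mathcal{R}$ over $\mathcal{A}$, equality must hold throughout, and in particular in the Belloni-Kahwol estimates for both $\sigma_t$ and $\tau_t$; this forces $u = k_1 \phi$ and $v = k_2 \psi$ for positive constants $k_1, k_2$, which is the definition of simplicity. The main obstacle is the pointwise gradient convexity inequality, the cornerstone of the Belloni-Kahwol technique; once it is granted, the algebraic constraint \eqref{alfa} is exactly what makes the coupling term cooperate via Cobb--Douglas concavity, and the two ingredients combine cleanly to yield the result.
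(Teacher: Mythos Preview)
Your proposal is correct and follows essentially the same route as the paper: both arguments form the power-mean competitors $\bigl(t u^p+(1-t)\phi^p\bigr)^{1/p}$, $\bigl(t v^q+(1-t)\psi^q\bigr)^{1/q}$ (the paper fixes $t=\tfrac12$), bound the gradient energy via the Belloni--Kahwol/Jensen convexity inequality, and bound the coupling term from below by the same inequality---you call it Cobb--Douglas concavity of $(X,Y)\mapsto X^{\alpha/p}Y^{\beta/q}$, the paper calls it H\"older's inequality for the counting measure, but under \eqref{alfa} these are identical statements. The equality analysis forcing $u=k_1\phi$, $v=k_2\psi$ is likewise the same, only phrased as ``equality throughout'' rather than by contradiction.
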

	
	 \begin{proof}
	 	Let  $(u,v) $ and  $(\phi,\psi)$   be two pairs of    eigenfunctions   associate with  $\lambda(p,q)$.  As we mentioned above, we assume that $u,v >0$ and $\phi, \psi>0$  in
	 	$\Omega$. Without loss of generality, we assume that these eigenfunctions are normalized such that
	 	\[
	 	\int_{\Omega} u ^{\alpha} \, v ^{\beta}\,dx=\int_{\Omega} \phi^{\alpha} \, \psi^{\beta}\,dx=1.
	 	\]
	 	 We show 	 	that there exist real numbers $k_1, k_2$ such that
	 	$ u=k_1 \phi $ and  $ v=k_2 \psi $.	 	
	 	  Note that
	 	\[
	 	w_1=\left(\frac{u^{p}+\phi^{p}}{2}\right)^{\frac{1}{p}} \quad \text{and} \quad   w_2=\left(\frac{v^{q}+\psi^{q}}{2}\right)^{\frac{1}{q}}
	 	\]
	 	 are admissible functions which means they belong to $\mathcal{A}$. 
	 	 In view of  variational form \eqref{lamvar}, we observe that
	 	 \begin{equation}\label{f23}
	 	 \lambda(p,q)\le \frac{\frac{\alpha}{p}\int_{\Omega} |\nabla w_1|^{p} dx  + \frac{\beta}{q}\int_{\Omega}
	 	 	|\nabla w_2|^{q} dx}{\int_{\Omega}w_{1}^{\alpha} w_{2}^{\beta} \, dx}.
	 	 \end{equation}
	 	
	 	 We show that
	 	 \begin{equation}\label{w12ineq}
	 	 w_1^\alpha w_2^\beta= \left(\frac{u^{p}+\phi^{p}}{2}\right)^{\frac{\alpha}{p}} \,  \left(\frac{v^{q}+\psi^{q}}{2}\right)^{\frac{\beta}{q}}\geq \frac{1}{2}(u^\alpha v^\beta + \phi^\alpha \psi^\beta).
	 	 \end{equation}
	 	 Due to the H\"{o}lder's inequality for counting measure, we observe that
	 	 $$(u^\alpha, \phi^\alpha) \cdot (v^\beta, \psi^\beta)\leq \left(u^{\frac{\alpha p}{\alpha}}+\phi^{\frac{\alpha p}{\alpha}} \right)^{\frac{\alpha}{p}} \left(v^{\frac{\beta q}{\beta}} +\psi^{\frac{\beta q}{\beta}} \right)^{\frac{\beta}{q}},$$
	 	 which yields \eqref{w12ineq}.
	 	 Thus,  (\ref{f23}) and (\ref{w12ineq}), and the normalization of $(u,v)$ and  $(\phi,\psi)$  yields
	 	
	 	\begin{equation}\label{f124}
	 	\lambda(p,q)\le  \frac{\alpha}{p} \int_{\Omega} |\nabla  w_1|^{p}\, dx     + \frac{\beta}{q}   \int_{\Omega} |\nabla  w_2|^{q} \, dx.	 	\end{equation}

	 For gradients of $w_1$  and $w_2$ 	we have
	 	\[
	 	|\nabla w_1|^{p}= \left(\frac{u^{p}+\phi^{p}}{2}\right) \left |\frac{ u^{p} \nabla \log \, u + \phi^{p}  \nabla \log\, \phi}{u^{p}+\phi^{p}} \right|^{p},
	 	\]
	 	\[
	 	|\nabla w_2|^{q}= \left(\frac{v^{q}+\psi^{q}}{2}\right) \left |\frac{ v^{q} \nabla \log \, v + \psi^{q}  \nabla \log\, \psi}{v^{q}+\psi^{q}} \right|^{q}.
	 	\]
	 Recalling Jensen's inequality
	 	\[
	 	\theta\left(\frac{\Sigma a_i x_i}{\Sigma a_i}\right) \le  \frac{\Sigma a_i \theta(x_i)}{\Sigma a_i}
	 	\]
	 	 for convex function $  \theta(\cdot)= |\cdot |^{p}$,
	   we obtain by choosing	 	
	 	\begin{equation*}
	 	\left \{
	 	\begin{array}{ll}
	 	a_1=  \frac{u^{p}}{u^{p}+\phi^{p}}, \quad  a_2= \frac{\phi^{p}}{u^{p}+\phi^{p}},\\
	 	x_1= \nabla \log\, u, \quad x_2= \nabla \log\, \phi,
	 	\end{array}
	 	\right.
	 	\end{equation*}
	 	 the following inequalities:
	 	\[
	 	|\nabla w_1|^{p} \le \frac{1}{2} |\nabla  u|^{p} + \frac{1}{2}|\nabla  \phi|^{p},
	 	\]
	 	\[
	 	|\nabla w_2|^{q} \le \frac{1}{2} |\nabla  v|^{q} + \frac{1}{2}|\nabla  \psi|^{q}.
	 	\]
	 	These inequalities  are strict at points where
 \[
 \nabla \log  u(x)\neq \nabla \log  \phi(x) \quad  \text{and} \quad  \nabla \log \,  v(x)\neq \nabla \log \, \psi(x).
 \]
 	 Therefore, we assume for the moment  that  $ \nabla \log \,  u\neq \nabla \log \, \phi$ or
	 	  $ \nabla \log \,  v\neq \nabla \log \, \psi $ in a set
	 	  of positive measure. Consequently, inequality (\ref{f124}) implies
	 	\begin{equation}\label{f126}
	 	\lambda(p,q)  <   \frac{\alpha}{2p} \int_{\Omega}(  |\nabla  u|^{p}  + |\nabla  \phi|^{p}) \, dx     + \frac{\beta}{2q}   \int_{\Omega} ( |\nabla  v|^{q}  + |\nabla  \psi|^{q}) \, dx= \lambda(p,q),
	 	\end{equation}	
where the last equality follows from  \eqref{lamvar} and the fact that $(u,v) $ and $(\phi,   \psi)$  are normalized.  Contradiction
\eqref{f126} shows that
\[
 \nabla \log    u = \nabla \log \, \phi  \quad \text{and}  \quad    \nabla \log   v   =  \nabla \log  \psi \quad    \text{a.e. in} \quad   \Omega.
\]
Therefore there exist constants $k_1$ and $k_2$ such that  $ u =k_1 \phi $ and $ v= k_2 \psi$.

	 \end{proof}

	 One interesting feature of  system \eqref{mainsys} is  that it will  reduce to the scalar equation \eqref{plapeq}  with Dirichlet  boundary conditions for $p=q$.
	 \begin{thm}\label{p=q}
	 	Let $p=q$ and   $(u,v)$ be  a solution of \eqref{mainsys}. Then $u$ equals $v$ in $\Omega$ for all $\alpha,\beta \geq 1$ satisfying (\ref{alfa}).
	 	Moreover, function $u=v$ solves \eqref{plapeq}.
	 \end{thm}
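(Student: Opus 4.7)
The plan is to identify $\lambda(p,q)$ with the scalar $p$-eigenvalue $\lambda(p)$ when $p=q$, and then to use the equality case of Young's inequality to pin down the eigenfunction pair.

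With $p=q$ the compatibility relation \eqref{alfa} reduces to $\alpha+\beta=p$. I start by establishing the upper bound $\lambda(p,q)\le \lambda(p)$: let $w$ denote the positive first eigenfunction of the scalar problem \eqref{plapeq}. Then $(w,w)$ is admissible and a direct computation gives
\[
\mathcal{R}(w,w)=\frac{\frac{\alpha+\beta}{p}\int_\Omega|\nabla w|^p\,dx}{\int_\Omega w^{\alpha+\beta}\,dx}=\frac{\int_\Omega|\nabla w|^p\,dx}{\int_\Omega w^p\,dx}=\lambda(p),
\]
so the variational characterization \eqref{lamvar} yields $\lambda(p,q)\le\lambda(p)$.

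For the reverse inequality, I take any $(u,v)\in\mathcal{A}$; as noted after \eqref{lamvar}, we may assume $u,v\ge 0$. Young's inequality with conjugate exponents $p/\alpha$ and $p/\beta$ (valid because $\alpha/p+\beta/p=1$) gives the pointwise bound $u^\alpha v^\beta\le(\alpha/p)u^p+(\beta/p)v^p$. Integrating and combining with the scalar Rayleigh inequality $\int_\Omega|\nabla u|^p\,dx\ge\lambda(p)\int_\Omega u^p\,dx$ together with its analogue for $v$, I obtain
\[
\mathcal{R}(u,v)\ge\frac{\frac{\alpha}{p}\int_\Omega|\nabla u|^p\,dx+\frac{\beta}{p}\int_\Omega|\nabla v|^p\,dx}{\frac{\alpha}{p}\int_\Omega u^p\,dx+\frac{\beta}{p}\int_\Omega v^p\,dx}\ge\lambda(p),
\]
so $\lambda(p,q)\ge\lambda(p)$, and therefore $\lambda(p,q)=\lambda(p)$.

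Finally, for a minimizing pair $(u,v)$ (normalized to be nonnegative) every inequality in the chain above must be an equality. Equality in Young's inequality holds pointwise iff $u(x)^p=v(x)^p$; together with $u,v\ge 0$ this forces $u=v$ a.e.\ in $\Omega$, and continuity of eigenfunctions upgrades this to $u\equiv v$ on $\Omega$. Substituting $u=v$ into either equation of \eqref{mainsys} collapses the right-hand side to $\lambda(p,q)\,u^{\alpha+\beta-1}=\lambda(p)\,u^{p-1}$, which is exactly \eqref{plapeq}. The delicate point is the final equality step: one must verify that the two independent inequalities (Young and scalar Rayleigh) can be simultaneously saturated, which happens precisely when $u$ and $v$ coincide and together equal a scalar first $p$-eigenfunction; everything else is essentially bookkeeping.
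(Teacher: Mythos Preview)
Your proof is correct, but it proceeds along a genuinely different line than the paper's argument. The paper never passes through the identification $\lambda(p,q)=\lambda(p)$; instead it argues directly at the PDE level. Assuming $u\neq v$ on a set $D=\{u<v\}$ of positive measure, the paper tests both equations of \eqref{mainsys} with $\eta=(v-u)\chi_D$, subtracts, and obtains
\[
\int_D\bigl(|\nabla v|^{p-2}\nabla v-|\nabla u|^{p-2}\nabla u\bigr)\cdot\nabla(v-u)\,dx=\lambda\int_D u^{\alpha-1}v^{\beta-1}(u-v)(v-u)\,dx.
\]
The left side is strictly positive by monotonicity of the $p$-Laplacian, while the right side is strictly negative since $u,v>0$ and $u<v$ on $D$ --- a contradiction.

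Your variational route has the merit of yielding $\lambda(p,q)=\lambda(p)$ explicitly (indeed, your lower-bound step is exactly Theorem~\ref{lowerest} specialized to $p=q$), and it cleanly explains \emph{why} the system collapses: the diagonal pair $(w,w)$ is the unique saturator of Young's inequality. The paper's approach, on the other hand, is more self-contained and slightly more general: it uses only the weak formulation and the strict monotonicity inequality $\langle|b|^{p-2}b-|a|^{p-2}a,\,b-a\rangle>0$, so it applies to \emph{any} positive solution pair of \eqref{mainsys} with $p=q$, not only to Rayleigh-quotient minimizers. In your argument the restriction to minimizers is essential, since the equality-case analysis is what forces $u=v$.
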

	 \begin{proof}
	 	Suppose that $u \neq v$ in $\Omega$. Then, without loss of generality, there  is a  subset  $D$ of $\Omega$  of positive measure  such that
	 	$$D=\{x\in \Omega: u(x)<v(x)\}.$$
	 		 	The set $D$ is an open set  due to the fact that $u,v \in C^1({\Omega})$, see \cite{khalil}.
	  We define
	  	\begin{equation*}
	  \eta(x)=  	\left \{
		\begin{array}{ll}
	  v(x)-u(x)  &   \text{in } D, \\
	  0     &  \text{in } \Omega\setminus D.
	  \end{array}
	  \right.
	  \end{equation*}
	 This $\eta$ belongs to $W^{1,p}(\Omega)$.  Considering $\eta$ as a test function, variational formulation   \eqref{varformofsol} with $p=q $ and  $ \lambda= \lambda(p,q) $  yields
	 	\begin{align*}
	 	\int_{D}|\nabla u|^{p-2} \nabla u \cdot \nabla\eta \, dx= \lambda \int_{D} u^{\alpha-1}v^\beta \eta dx,
	 	\\
	 	\int_{D}|\nabla v|^{p-2} \nabla v \cdot \nabla\eta \, dx= \lambda \int_{D} u^{\alpha}v^{\beta-1} \eta dx,
	 	\end{align*}
	 	and consequently,  we have
	 	\begin{equation}\label{subeq}
	 	\int_{D}(|\nabla v|^{p-2} \nabla v-|\nabla u|^{p-2} \nabla u)\cdot \nabla (u-v) \, dx=  \lambda \int_{D}  u^{\alpha-1}v^{\beta-1}(u-v)(v-u)  \, dx.
	 	\end{equation}
	 In view of  the positivity of the eigenfunctions of \eqref{mainsys}, the right hand side of \eqref{subeq} is negative. Recalling
	 	the inequality from \cite{lind3}:
	 $$	\left<|b|^{p-2}b-|a|^{p-2}a, b-a\right>>0,\:\; \forall  a,  b \in \mathbb{R}^N, \,  a\neq b,\:\; \text{and}\:\; p>1,$$
	 	where $\left< \cdot, \cdot \right>$ denotes the inner product in $ \mathbb{R}^N$ and setting $b=\nabla v$ and $a=\nabla u$, we deduce  that the left hand side of  \eqref{subeq} is a  positive quantity.
	 	This is a contradiction  with  the negativity of the  right hand side and, thus, $u=v$ in $\Omega$.
	 \end{proof}
	
	 Now, we will provide some upper and lower bounds for the first eigenvalue  of system \eqref{mainsys}. Such estimates for eigenvalues have been considered for similar systems in various papers, for example \cite{boccardo,bonder,napoli}. First, we prove the lower  bound.
	 \begin{thm}\label{lowerest}
	 	Let $ \lambda(p,q) $  be defined by \eqref{lamvar}, then
	 	$$ \lambda(p,q)\geq \min \{\lambda(p),\lambda(q)\}, $$
	 	where $\lambda(p)$ and $\lambda(q)$ are the principal eigenvalues of $p$-Laplacian given by \eqref{lambdap}.
	 \end{thm}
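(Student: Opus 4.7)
The plan is to use the variational characterization of $\lambda(p,q)$ from \eqref{lamvar}, combined with the variational characterization of the scalar eigenvalues $\lambda(p)$ and $\lambda(q)$ from \eqref{lambdap}, and glue them together via H\"older's and Young's inequalities. Thanks to the discussion after \eqref{lamvar}, the infimum in \eqref{lamvar} may be taken over pairs $(u,v)$ with $u,v\ge 0$, so the denominator of $\mathcal{R}(u,v)$ is simply $\int_\Omega u^\alpha v^\beta\,dx$.

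First I would bound the denominator from above. Since $\alpha/p+\beta/q=1$ by \eqref{alfa}, H\"older's inequality with conjugate exponents $p/\alpha$ and $q/\beta$ applied to the factorization $u^\alpha\cdot v^\beta$ gives
\[
\int_\Omega u^\alpha v^\beta\,dx \le \left(\int_\Omega u^p\,dx\right)^{\alpha/p}\left(\int_\Omega v^q\,dx\right)^{\beta/q}.
\]
Next, the variational definition \eqref{lambdap} of $\lambda(p)$ and $\lambda(q)$ yields $\int_\Omega u^p\,dx \le \lambda(p)^{-1}\int_\Omega|\nabla u|^p\,dx$ and similarly for $v$. Substituting these bounds gives
\[
\int_\Omega u^\alpha v^\beta\,dx \le \left(\frac{\int_\Omega |\nabla u|^p\,dx}{\lambda(p)}\right)^{\alpha/p}\left(\frac{\int_\Omega |\nabla v|^q\,dx}{\lambda(q)}\right)^{\beta/q}.
\]

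Then I would apply Young's inequality with the same pair of exponents: for nonnegative $a,b$,
\[
a^{\alpha/p}b^{\beta/q}\le \frac{\alpha}{p}a+\frac{\beta}{q}b.
\]
Choosing $a=\int_\Omega|\nabla u|^p\,dx/\lambda(p)$ and $b=\int_\Omega|\nabla v|^q\,dx/\lambda(q)$, and using the trivial bound $1/\lambda(p),1/\lambda(q)\le 1/\min\{\lambda(p),\lambda(q)\}$, I obtain
\[
\int_\Omega u^\alpha v^\beta\,dx \le \frac{1}{\min\{\lambda(p),\lambda(q)\}}\left(\frac{\alpha}{p}\int_\Omega|\nabla u|^p\,dx+\frac{\beta}{q}\int_\Omega|\nabla v|^q\,dx\right).
\]
Rearranging shows $\mathcal{R}(u,v)\ge \min\{\lambda(p),\lambda(q)\}$ for every admissible nonnegative pair, and taking the infimum via \eqref{lamvar} finishes the proof.

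I do not expect any real obstacle; the only subtlety is justifying the reduction to nonnegative $u,v$ (already established in the excerpt) and keeping track of the exponents so that H\"older and Young both line up with the constraint $\alpha/p+\beta/q=1$. Everything else is a direct combination of the two scalar Rayleigh quotients with the joint one.
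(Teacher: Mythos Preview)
Your argument is correct. It differs from the paper's proof mainly in the order and choice of inequalities. The paper applies Young's inequality \emph{pointwise} to the integrand,
\[
u^\alpha v^\beta \le \frac{\alpha}{p}u^p+\frac{\beta}{q}v^q,
\]
integrates, and then uses the mediant-type inequality $(a+b)/(c+d)\ge \min\{a/c,\,b/d\}$ together with the scalar Rayleigh quotients. You instead first apply H\"older's inequality in integral form, insert the scalar Rayleigh bounds, and only then use Young's inequality on the resulting numbers before pulling out $\min\{\lambda(p),\lambda(q)\}$. Both routes rely on the same constraint $\alpha/p+\beta/q=1$ and yield the same bound; the paper's path is marginally shorter since the pointwise Young step already subsumes your H\"older-then-Young combination, while your version makes the role of the scalar eigenvalue bounds slightly more explicit. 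Neither approach gains or loses generality.
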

	 \begin{proof}
	 	Let  $(u,v) $  be the  normalized  eigenfunction   associated with  $ \lambda(p,q) $. Using Young's inequality, we obtain
	 	$$ \lambda(p,q)=\frac{\frac{\alpha}{p}\int_{\Omega} |\nabla u|^{p} dx  + \frac{\beta}{q}\int_{\Omega}
	 		|\nabla v|^{q} dx}{\int_{\Omega}u^{\alpha} v^{\beta} \, dx}\geq  \frac{\frac{\alpha}{p}\int_{\Omega} |\nabla u|^{p} dx  + \frac{\beta}{q}\int_{\Omega}
	 		|\nabla v|^{q} dx}{\frac{\alpha}{p}\int_{\Omega}u^{p}dx + \frac{\beta}{q}\int_{\Omega}v^{q}dx}.$$
                Applying inequality
                $$
                  \frac{a+b}{c+d} \geq \min \left\{ \frac{a}{c}, \frac{b}{d} \right\}
                $$
                with $a=\frac{\alpha}{p}\int_{\Omega} |\nabla u|^{p} dx$,
                $b=\frac{\beta}{q}\int_{\Omega} |\nabla v|^{q} dx$, etc., we obtain
%
%
	 	$$\lambda(p,q)\geq \min\left\{\frac{\int_{\Omega} |\nabla u|^{p} dx }{\int_{\Omega}u^{p}dx},\: \frac{\int_{\Omega} |\nabla v|^{q} dx }{\int_{\Omega}v^{q}dx} \right\}\geq \min \{\lambda(p),\lambda(q)\}.$$
	 \end{proof}
	

     Determining an upper bound  for the first eigenvalue is a more subtle problem and we will investigate it for certain  special cases.
     First, we address this question in dimension $N=1$.  The upper bound derived below is based on the following theorem from \cite{borell}.
     \begin{thm}\label{borell}
     	Let $a_1,\dots , a_n$ be real numbers all greater or equal  to 1. Suppose that
     	\begin{equation*}
     	\min_{{I\subseteq \{1,\dots,n\}}} \left| \sum_{k\in I} a_k-\frac{1}{2} \sum_{k=1}^n a_k\right|,
     	\end{equation*}
     	is attained for $I=I_0$. Set
     	$$a_{0}=\underset{k\in  I_0}{\sum} a_k, \qquad a_{00}=\underset{ k\notin I_0}{\sum} a_k.$$
     	Let $g_1,\dots ,g_n$ be nonnegative functions on the interval $(0,1)$ such that the function $g_1^{1/a_1},\dots ,g_n^{1/a_n}$ are concave and let $p_1,\dots,p_n$ be real numbers greater or equal to 1. Then
     	$$\int_0^1 \prod_{k=1}^n g_k(x)dx \geq C \prod_{k=1}^n \left(\int_0^1  g_k^{p_k}(x)dx \right)^{1/p_k}, $$
     	where
     	$$C=\left(\prod_{k=1}^n(1+a_kp_k)^{1/p_k}\right)\mathcal{B}(1+a_0,1+a_{00}),$$
     	and $\mathcal{B}$ stands for the beta function
     	$$ \mathcal{B}(r,s)=\int_{0}^{1} x^{r-1}(1-x)^{s-1}dx.$$
     \end{thm}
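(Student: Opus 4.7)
The plan is to exploit the concavity of each $g_k^{1/a_k}$ by reducing to extremal \emph{tent} configurations and then carrying out an explicit beta-integral computation. For each $k$ set $h_k = g_k^{1/a_k}$. Since $h_k$ is concave and nonnegative on $[0,1]$, it attains its maximum at some point $s_k \in [0,1]$, and concavity forces $h_k(x) \geq h_k(s_k)\min(x/s_k,(1-x)/(1-s_k))$. Hence $g_k$ lies above a tent function raised to the power $a_k$ and is bounded above by the constant $M_k = g_k(s_k)$. These two-sided bounds yield a one-dimensional reverse-H\"older estimate of Borell type relating $\int_0^1 g_k$, $(\int_0^1 g_k^{p_k})^{1/p_k}$ and $M_k$; the factor $(1+a_kp_k)^{1/p_k}$ that appears in the constant $C$ of Theorem \ref{borell} emerges from integrating the tent power $(x/s_k)^{a_kp_k}$ explicitly on each half of the interval.

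Next, I would reduce to the extremal case through a rearrangement-type argument: for fixed peak values $\{M_k\}$ and positions $\{s_k\}$, the ratio $\int_0^1 \prod_k g_k\,/\, \prod_k (\int_0^1 g_k^{p_k})^{1/p_k}$ is minimized precisely when each $g_k$ coincides with its tent power, because concavity above the tent raises the numerator at least as fast as the denominator. Making this rigorous is the main technical step, and is naturally carried out through a layer-cake decomposition together with the Borell--Brascamp--Lieb / Pr\'ekopa--Leindler family of inequalities, which is the correct framework for $(1/a_k)$-concave functions.

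Once all $g_k$ are tent powers, the integrand $\prod_k g_k(x)$ is a constant multiple of $x^{A}(1-x)^{B}$ piecewise on the subintervals cut out by the points $s_k$, so $\int_0^1 \prod_k g_k$ evaluates in closed form. Aligning all peaks at a common point and then choosing the partition $I_0$ to minimize $\lvert\sum_{k\in I}a_k - \tfrac{1}{2}\sum_k a_k\rvert$ corresponds to balancing the two exponents $a_0$ and $a_{00}$ in $\mathcal{B}(1+a_0,1+a_{00})$, which is exactly what maximizes the beta integral for fixed $a_0+a_{00}$; hence the combinatorial optimum in the statement is forced by the arithmetic of the beta function.

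The hard part will be the reduction to tent configurations while simultaneously tracking the $L^{p_k}$ norms: one has to verify that replacing each $g_k$ by its tent lower bound decreases the denominator at least as fast as the numerator, uniformly in the other data. An induction on $n$, with the single-function Borell lemma supplying the base case and a Pr\'ekopa--Leindler-type convolution inequality driving the inductive step, looks like the cleanest route, but keeping the sharp constants (and recovering the precise partition $I_0$) is where the bulk of the technical effort will lie.
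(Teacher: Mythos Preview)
The paper does not prove Theorem~\ref{borell}. It is quoted verbatim from Borell's article \cite{borell} (C.~Borell, \emph{Inverse H\"older inequalities in one and several dimensions}, J.~Math.\ Anal.\ Appl.\ 41 (1973), 300--312) and is used as a black box in the proof of Theorem~\ref{oneup}. There is therefore no ``paper's own proof'' to compare your proposal against.

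As to the proposal itself: the strategy of bounding each $h_k=g_k^{1/a_k}$ below by the tent function $h_k(s_k)\min\{x/s_k,(1-x)/(1-s_k)\}$ and then computing the resulting beta integrals is indeed the backbone of Borell's original argument, and the factors $(1+a_kp_k)^{1/p_k}$ do arise exactly as you say. However, your reduction step is not yet an argument: the claim that replacing $g_k$ by its tent lower bound ``decreases the denominator at least as fast as the numerator'' goes in the wrong direction. Replacing $g_k$ by a smaller function decreases \emph{both} the numerator $\int \prod g_k$ and the denominator $\prod(\int g_k^{p_k})^{1/p_k}$, so without a quantitative comparison you cannot conclude anything about the ratio. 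What Borell actually does is bound the numerator from below using the tent minorants (getting a beta integral) and bound the denominator from above using the constant majorant $g_k\le M_k$, so that the two bounds act on opposite sides of the inequality rather than competing. The appearance of the optimal partition $I_0$ then comes not from ``maximizing the beta integral for fixed $a_0+a_{00}$'' but from minimizing over the peak locations $s_k$, which forces the peaks to coalesce and splits the $a_k$ between the left and right pieces in the most balanced way. Your invocation of Pr\'ekopa--Leindler is unnecessary here; the one-dimensional argument is elementary once the bounds are placed on the correct sides.
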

     Now we are prepared to prove the following theorem.
     \begin{thm}\label{oneup}
     	
     	Let $\Omega=(0,1)$ then for the principal eigenvalue of \eqref{mainsys} we have
     	\begin{equation}\label{sun1}
     	\lambda(p,q)\leq \frac{1}{C}\left(\frac{\alpha}{p}\lambda(p)+\frac{\beta}{q} \lambda(q)\right),
     	\end{equation}
     	where
     	$$C=(1+p)^{\alpha/p}(1+q)^{\beta/q} \mathcal{B}(1+\alpha,1+\beta).$$
     	
     \end{thm}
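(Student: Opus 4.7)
The plan is to apply the variational characterization \eqref{lamvar} to a carefully chosen admissible pair and then invoke Theorem \ref{borell} to control the denominator of $\mathcal{R}$. First I would take $u$ to be the positive first eigenfunction of the scalar $p$-Laplacian on $(0,1)$ normalized by $\int_0^1 u^p\,dx = 1$, and analogously $v$ the first $q$-eigenfunction with $\int_0^1 v^q\,dx = 1$. Since both are positive on $(0,1)$, the pair lies in $\mathcal{A}$, and by \eqref{lambdap} the normalizations force $\int_0^1 |u'|^p\,dx = \lambda(p)$ and $\int_0^1 |v'|^q\,dx = \lambda(q)$. Hence the numerator of $\mathcal{R}(u,v)$ is exactly $\tfrac{\alpha}{p}\lambda(p) + \tfrac{\beta}{q}\lambda(q)$, and the whole task reduces to proving the lower bound $\int_0^1 u^\alpha v^\beta\,dx \ge C$.

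To apply Theorem \ref{borell} I would set $n=2$, $g_1 = u^\alpha$, $g_2 = v^\beta$, $a_1 = \alpha$, $a_2 = \beta$, $p_1 = p/\alpha$, $p_2 = q/\beta$. The concavity hypothesis asks that $g_1^{1/a_1} = u$ and $g_2^{1/a_2} = v$ be concave on $(0,1)$; this is standard, since $-(|u'|^{p-2}u')' = \lambda(p) u^{p-1} \ge 0$ makes $|u'|^{p-2}u'$ nonincreasing, and because $s \mapsto |s|^{p-2}s$ is strictly increasing for $p>1$, this transfers to monotonicity of $u'$, whence concavity of $u$, and similarly for $v$. The condition $p_k \ge 1$ follows from \eqref{alfa}: because $\alpha,\beta \ge 1$ and $p,q>1$, both $\alpha/p$ and $\beta/q$ are strictly positive and their sum is $1$, so each is strictly less than $1$, i.e., $p_1, p_2 > 1$.

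Next I would compute the balance index $I_0$. For $n=2$ the quantity $\bigl|\sum_{k\in I}a_k - \tfrac12(a_1+a_2)\bigr|$ is minimized by $I_0 = \{1\}$ (or $\{2\}$), which gives $a_0 = \alpha$ and $a_{00} = \beta$. Theorem \ref{borell} then yields
\[
\int_0^1 u^\alpha v^\beta\,dx \;\ge\; (1+\alpha p_1)^{1/p_1}(1+\beta p_2)^{1/p_2}\,\mathcal{B}(1+\alpha,1+\beta)\left(\int_0^1 u^p\,dx\right)^{\alpha/p}\!\!\left(\int_0^1 v^q\,dx\right)^{\beta/q}.
\]
With the identifications $\alpha p_1 = p$, $\beta p_2 = q$, $1/p_1 = \alpha/p$, $1/p_2 = \beta/q$ and the chosen normalizations, the right-hand side collapses precisely to the constant $C = (1+p)^{\alpha/p}(1+q)^{\beta/q}\mathcal{B}(1+\alpha,1+\beta)$ from the statement. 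Combining this with the numerator computation gives $\lambda(p,q) \le \mathcal{R}(u,v) \le C^{-1}\bigl(\tfrac{\alpha}{p}\lambda(p) + \tfrac{\beta}{q}\lambda(q)\bigr)$, which is \eqref{sun1}.

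The main genuine work is the concavity of the scalar one-dimensional eigenfunctions (standard but worth spelling out); the rest is bookkeeping to match the parameters so that Borell's constant reproduces $C$ exactly. The natural obstacle to generalizing the argument to $N \ge 2$ is precisely this concavity hypothesis, which is why the theorem is stated only in dimension one.
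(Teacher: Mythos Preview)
Your proposal is correct and follows essentially the same route as the paper: choose the normalized scalar $p$- and $q$-eigenfunctions as the test pair in \eqref{lamvar}, then apply Theorem~\ref{borell} with exactly the same parameter identifications $a_1=\alpha$, $a_2=\beta$, $g_1=u^\alpha$, $g_2=v^\beta$, $p_1=p/\alpha$, $p_2=q/\beta$ to bound the denominator below by $C$. Your write-up is in fact slightly more thorough than the paper's, since you spell out the concavity argument for the one-dimensional eigenfunctions and the computation of $I_0$, whereas the paper simply cites the concavity and states $a_0=\alpha$, $a_{00}=\beta$ without further comment; your closing remark about the concavity hypothesis being the obstruction in higher dimensions also mirrors the paper's observation immediately after the proof.
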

     \begin{proof}
     	Let $u = u(p)$ and $v = u(q)$ be the eigenfunctions associate with $\lambda(p)$ and $\lambda(q)$ respectively and let they be  normalized such that $\|u\|_{L^p(\Omega)}=\|v\|_{L^q(\Omega)}=1.$
     	In Theorem \ref{borell}, we set
     	$$a_1=\alpha,\; a_2=\beta,\;  g_1=u^\alpha,\; g_2=v^\beta,\;p_1=\frac{p}{\alpha}, \;p_2=\frac{q}{\beta}.$$
     	From here
     	\[
     	a_0=\alpha,\; a_{00}=\beta.
     	\]
     	Recall that $\alpha, \beta \geq 1$ and also $\frac{p}{\alpha}, \frac{q}{\beta}\geq 1$  regarding the fact that $\frac{\alpha}{p}+ \frac{\beta}{q}=1$. It is known that the first eigenfunction of \eqref{plapeq} is concave for one dimensional problems \cite{lind1}.
     	Hence, functions $g_1, g_2$    	are concave as well. Thus, in view of Theorem \ref{borell} 	we observe that
     	\begin{equation}\label{Cineq}
     	\int_{0}^{1}g_1g_2dx=\int_{0}^{1} u^\alpha v^\beta dx\geq C  \|u\|_{L^p(\Omega)}^{\alpha}\|v\|_{L^q(\Omega)}^{\beta}=C,
     	\end{equation}
     	where
     	$$C=(1+p)^{\alpha/p}(1+q)^{\beta/q} \mathcal{B}(1+\alpha,1+\beta).$$
     	
     	Applying the  variational characterization \eqref{lamvar}, we infer that
     	$$\lambda(p,q) \leq  \frac{\frac{\alpha}{p}\int_{\Omega} |\nabla u(x)|^{p} dx  + \frac{\beta}{q}\int_{\Omega} |\nabla v(x)|^{q} dx,}{\int_{\Omega} u^\alpha v^\beta dx}=\frac{\frac{\alpha}{p} \lambda(p)+ \frac{\beta}{q}\lambda(q)}{\int_{0}^{1} u^\alpha v^\beta dx},$$
     	and then employing \eqref{Cineq} we obtain (\ref{sun1}).

     \end{proof}
     The above proof is strongly based upon the concavity of the first eigenfunctions of \eqref{plapeq} in  dimension one. It is worth noting	that the first eigenfunction of \eqref{plapeq}  is  not  concave, in higher dimensions in general \cite{lind1}.
     Note that another  upper bound for dimension one is given in \cite[Section 5]{bonder2}.

     Concerning two dimensions, we obtain an upper bound for the first eigenvalue when $\alpha=\beta=1$,
     provided the following hypothesis holds true.

\begin{hypothesis}\label{hy:1}
If $u(p)$ denotes the eigenfunction corresponding to the first eigenvalue of the scalar $p$-Laplacian \eqref{plapeq} normalized such that $\| u(p) \|_{L^p(\Omega)} = 1$ then
\begin{equation}
  \label{eq:hyp}
  \int_\Omega u(p) u(q) \, dx \geq \int_\Omega u(1) u(\infty) \, dx
\end{equation}
for all $p,q\in[1,\infty]$ satisfying $1/p + 1/q = 1$.
\end{hypothesis}

This hypothesis can be investigated by introducing function
\begin{equation}
  \label{eq:fp}
   f(p)  =   \int_\Omega u(p) u\left(\frac{p}{p-1}\right) \, dx
 \quad\text{for }p\in[1,\infty],
\end{equation}
where the value of $p/(p-1)$ for $p=1$ and $p=\infty$ is understand to be $\infty$ and $1$, respectively.
Note that function $f(p)$ is symmetric in the sense
$$
   f(p) = f\left(\frac{p}{p-1}\right)
   \quad\text{for all }p\in[1,\infty].
$$
Consequently, it is sufficient to investigate $f(p)$ for $p\in[2,\infty]$ only.
Hypothesis \eqref{eq:hyp} is equivalent to the statement $f(p) \geq f(\infty)$ for all $p \in [2,\infty]$.

It is easy to show that the maximum of $f$ is attained at 2. Indeed,
$$
  f(p) = \int_\Omega u(p) u(q) \leq \| u(p) \|_{L^p(\Omega)} \| u(q) \|_{L^q(\Omega)} = 1
\quad\text{and}\quad
  f(2) = 
    \| u(2) \|_{L^2(\Omega)}^2 = 1.
$$
Thus, if $f(p)$ were nonincreasing for $p \in [2,\infty]$ then the minimum of $f(p)$ would be attained at $p=\infty$ and Hypothesis~\ref{hy:1} would hold true. However, the monotonicity of $f(p)$ is not clear.

One possibility how to investigate it is to show the existence and nonnegativity of the derivative
$$
   f'(p) = \frac{-1}{(p-1)^2} f'\left(\frac{p}{p-1}\right)
$$
for $p \in [2,\infty]$.
Figure~\ref{fi:fp} shows numerically computed values of $f(p)$ for interval $\Omega = (0,1)$.
These results indicate that $f(p)$ is smooth, its derivative is positive in $[1,2]$, negative in $[2,\infty]$, $f'(2)=0$,
and consequently that Hypothesis 1 holds true.
Note that in this case $u(1) = \chi_\Omega$ is the characteristic function of $\Omega$, $u(\infty) = 1-|2x-1|$ is the distance function, and hence $f(1) = f(\infty) = 1/2$.
\begin{figure}[ht]
  \centering
  \includegraphics[width=0.8\textwidth]{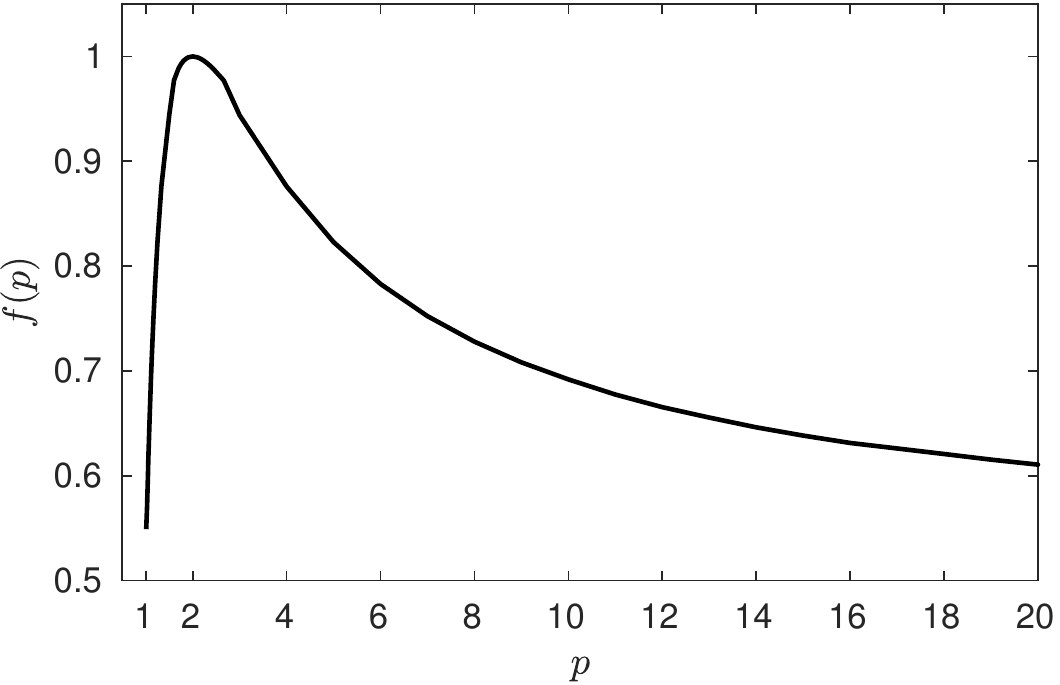}
  \caption{The graph of function $f(p)$ defined in \eqref{eq:fp} for $\Omega = (0,1)$.
  }
  \label{fi:fp}
\end{figure}


    \begin{thm}\label{hiup}
        Assume  Hypothesis~\ref{hy:1} holds true. Let $\Omega$ be a convex subset of $\mathbb{R}^2$ and
    	let $\alpha=\beta=1$.
    	Then for the principal eigenvalue of \eqref{mainsys} we have
    	
    	\begin{equation}
    	  \label{eq:upperbound}
    	  \lambda(p,q) \leq \frac{3\,|\Omega|}{|B_\varrho|} \left(\frac{1}{p}\lambda(p)+\frac{1}{q} \lambda(q)\right),
        \end{equation}
%
    	where
    	$B_{\varrho}$ denotes the largest disc that can be inscribed in $\Omega$ and
    	$\lambda(p), \lambda(q)$ are the principal eigenvalues given by   \eqref{lambdap} and corresponding to $p, q$, respectively.
    \end{thm}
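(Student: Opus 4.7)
The plan is to apply the variational characterization \eqref{lamvar} with the specific test pair $(u(p), u(q))$, where $u(p)$ and $u(q)$ denote the scalar principal eigenfunctions of \eqref{plapeq} with exponents $p$ and $q$, normalized so that $\|u(p)\|_{L^p(\Omega)} = \|u(q)\|_{L^q(\Omega)} = 1$. Since $\alpha = \beta = 1$ and $1/p + 1/q = 1$, the identities $\int_\Omega |\nabla u(p)|^p \, dx = \lambda(p)$ and $\int_\Omega |\nabla u(q)|^q \, dx = \lambda(q)$, which follow from \eqref{lambdap} together with the normalization, yield
\[
\lambda(p,q) \leq \mathcal{R}(u(p), u(q)) = \frac{\frac{1}{p}\lambda(p) + \frac{1}{q}\lambda(q)}{\int_\Omega u(p)\, u(q) \, dx}.
\]
It therefore suffices to establish the uniform lower bound $\int_\Omega u(p)\, u(q) \, dx \geq |B_\varrho|/(3|\Omega|)$.

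Next, I would invoke Hypothesis~\ref{hy:1} to obtain $\int_\Omega u(p) u(q) \, dx \geq \int_\Omega u(1) u(\infty) \, dx$. From the limit characterizations recalled in Section~\ref{mathback}, $u(\infty)(x) = d(x)/\varrho$ with $d(x) = \mathrm{dist}(x, \partial\Omega)$, while $u(1) = \chi_{C_\Omega}/|C_\Omega|$ where $C_\Omega$ is the Cheeger set of $\Omega$; in particular
\[
 \int_\Omega u(1) u(\infty) \, dx = \frac{1}{\varrho\, |C_\Omega|} \int_{C_\Omega} d(x) \, dx.
\]

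The geometrically crucial step is the inclusion $B_\varrho(x_0) \subset C_\Omega$, where $x_0$ is an incenter of $\Omega$. Using the classical representation $C_\Omega = K_r \oplus B_r$ of the Cheeger set of a convex planar domain, with $r = 1/h(\Omega) \leq \varrho$ and inner parallel set $K_r = \{y \in \Omega : d(y, \partial\Omega) \geq r\}$, the argument is as follows: for any $x \in B_\varrho(x_0)$, choose $y$ on the segment from $x_0$ to $x$ with $|x - y| = \min(r, |x - x_0|)$; a direct triangle-inequality check then gives $B_r(y) \subset B_\varrho(x_0) \subset \Omega$, hence $y \in K_r$ and $x \in B_r(y) \subset C_\Omega$. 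With this inclusion established, the pointwise bound $d(x) \geq \varrho - |x - x_0|$ on $B_\varrho(x_0)$, which follows from $B_{\varrho - |x - x_0|}(x) \subset B_\varrho(x_0) \subset \Omega$, together with polar integration gives
\[
\int_{C_\Omega} d(x) \, dx \geq \int_{B_\varrho(x_0)} (\varrho - |x - x_0|) \, dx = \int_0^\varrho (\varrho - s)\, 2\pi s \, ds = \frac{\pi \varrho^3}{3}.
\]
Dividing by $\varrho|C_\Omega| \leq \varrho|\Omega|$ yields $\int_\Omega u(1)u(\infty) \, dx \geq \pi\varrho^2/(3|\Omega|) = |B_\varrho|/(3|\Omega|)$, which, substituted into the Rayleigh bound, gives the claimed inequality. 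The main technical obstacle is the geometric inclusion $B_\varrho \subset C_\Omega$, resting on structural results about Cheeger sets of convex planar domains; everything else reduces to direct calculation.
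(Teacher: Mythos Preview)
Your proposal follows essentially the same route as the paper: plug the scalar eigenfunctions $(u(p),u(q))$ into the Rayleigh quotient, invoke Hypothesis~\ref{hy:1}, identify $u(1)=\chi_{C_\Omega}/|C_\Omega|$, and then bound the remaining integral from below using the inclusion $B_\varrho\subset C_\Omega$ (which both you and the paper derive from the Kawohl--Lachand-Robert representation $C_\Omega=\Omega^{t^*}+t^*B_1$). The final computation $\int_{B_\varrho}(\varrho-|x-x_0|)\,dx=\pi\varrho^3/3$ is identical in both arguments.

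The one point of divergence deserves a flag. You write that ``from the limit characterizations recalled in Section~\ref{mathback}, $u(\infty)(x)=d(x)/\varrho$'' with $d(x)=\operatorname{dist}(x,\partial\Omega)$, but Section~\ref{mathback} only asserts this when $\Omega$ is a ball; for general convex $\Omega$ the $\infty$-ground state need not be unique, and the paper does not claim that the object $u(\infty)$ appearing in Hypothesis~\ref{hy:1} coincides with $d/\varrho$. The paper sidesteps this by using instead the weaker comparison $u(\infty)\geq d_B/\varrho$ in $\Omega$, where $d_B=\operatorname{dist}(\cdot,\partial B_\varrho)$ is extended by zero outside $B_\varrho$, justified by $B_\varrho\subset\Omega$. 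Since your subsequent estimate only uses $d(x)\geq \varrho-|x-x_0|=d_B(x)$ on $B_\varrho$, your computation in fact reduces to exactly the paper's bound; the issue is solely the unjustified intermediate identification $u(\infty)=d/\varrho$. Replacing that equality by the inequality $u(\infty)\geq d_B/\varrho$ (or, if you prefer, by the known fact that $d/\varrho$ is \emph{an} $\infty$-ground state together with a clarification of what $u(\infty)$ denotes in Hypothesis~\ref{hy:1}) closes the gap and makes your argument coincide with the paper's.
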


    \begin{proof}
    Considering eigenfunctions $u = u(p)$ and $v = u(q)$ as in the proof of Theorem~\ref{oneup} and using the  variational characterization \eqref{lamvar}, we observe that
    $$
      \lambda(p,q) \leq  \frac{\frac{1}{p}\int_{\Omega} |\nabla u(x)|^{p} dx  + \frac{1}{q}\int_{\Omega} |\nabla v(x)|^{q} dx,}{\int_{\Omega} u v dx}=\frac{\frac{1}{p} \lambda(p)+ \frac{1}{q}\lambda(q)}{\int_{\Omega} u v dx},
    $$
    The need lower bound for $\int_{\Omega} u v \, dx$ is provided by Hypothesis~\ref{hy:1}:
    $$
      \int_{\Omega} u(1) u(\infty) \,dx \leq  \int_{\Omega} u v \,dx,
    $$
    where $ u(1) $ and $ u(\infty)  $ are normalized in  $L^{1}(\Omega)$ and   $L^{\infty}(\Omega)$, respectively.  We know that
    $$
      \int_{\Omega} u(1) u(\infty) \,  dx =  \frac{1}{|C_{\Omega}|} \int_{\Omega} \chi_{C_{\Omega}} \, u(\infty)  \, dx= \frac{1}{|C_{\Omega}|} \int_{C_\Omega}  u(\infty)  \, dx.
    $$

    Further, let $B_\varrho$ be a largest disc inscribed to $\Omega$ and let $\varrho$ be its radius. The normalized eigenfunction of the $\infty$-Laplacian in the disc $B_\varrho$ is $d_B/\varrho$, where
    \[
      d_B(x) = \text{dist}(x, \partial B_{\varrho}).
    \]
    If we extend $d_B$ by zero then
    $$
      u(\infty) \geq \frac{1}{\varrho} d_B \quad\text{in }\Omega,
    $$
    because $B_\varrho \subset \Omega$.
    Thus,
    \[
    \label{eq:est1}
    \frac{1}{|C_{\Omega}|} \int_{C_\Omega}  u(\infty)  \, dx
    \geq \frac{1}{\varrho\,|C_{\Omega}| } \int_{C_\Omega} d_B(x) \,dx
    \geq \frac{1}{\varrho\,|C_{\Omega}| } \int_{B_\varrho} d_B(x) \,dx
    = \frac{|B_\varrho|}{3\,|C_{\Omega}| }
    \geq \frac{|B_\varrho|}{3\,|\Omega|},
    \]
    where we use the fact that $B_\varrho \subset C_{\Omega}$.
    This inclusion follows from \cite[Theorem~1]{KawLac2006}, where the convexity of $\Omega$ is assumed.
    This theorem states that there exists $t^*>0$ such that $C_\Omega = \Omega^{t^*} + t^* B_1$, where $\Omega^{t^*} = \{ x \in \Omega : \operatorname{dist}(x,\partial\Omega) > t^*\}$, $B_1$ is the unit disc and the addition is the Minkowski addition of sets, i.e., $A+B = \{a + b : a\in A,\ b\in B\}$. Shifting $\Omega$ such that the center of $B_\varrho$ is at origin, we immediately see that $(\varrho-t^*)B_1 \subset \Omega^{t^*}$. Consequently, $C_\Omega \supset (\varrho-t^*)B_1 + t^* B_1 = B_\varrho$.

    To conclude, we obtained
    $$
      \int_{\Omega} u v dx \geq \frac{|B_\varrho|}{3\,|\Omega|}
    $$
    and the proof is finished.
\end{proof}

\begin{rem}
 If  the domain is a ball  in  $\mathbb{R}^N $ or a domain where the first eigenfunction of the infinity Laplace operator is the distance function,  then it is easy to see that
\[
\int_{\Omega} u(1) v(\infty) \,  dx = \frac{1}{N+1}.
\]
This yields (under Hypothesis 1) upper bound
$$
  \lambda(p,q) \leq (N+1) \left(\frac{1}{p}\lambda(p)+\frac{1}{q} \lambda(q)\right).
$$
Interestingly, estimate \eqref{eq:upperbound} turns to this bound if $\Omega$ is chosen as a disc in Theorem~\ref{hiup}.
\end{rem}


\section{An algorithm to approximate   the first eigenvalue and  the first eigenfunction}\label{numeralgo}
Algorithm 1 computes an approximation of the first eigenvalue and the corresponding eigenfunction of \eqref{mainsys}.
\\
\\
\\
\begin{mdframed}
	\begin{center}
		\textbf{Algorithm 1}
	\end{center}

\begin{enumerate}
\item
  Set $k=0$ and choose an initial guess $(u, v) \in W^{1,p}_{0}(\Omega) \times  W^{1,q}_{0}(\Omega)$ such that  $u, v  \geq 0$.
\item
  Given $u$, $v$, normalize them as
  \[
    u_{k}=\frac{u}{ ( \int_{\Omega} u^{\alpha} v ^{\beta}  dx)^{\frac{1}{p} } },\quad v_{k}=\frac{v}{ ( \int_{\Omega} u^{\alpha} v ^{\beta}  dx)^{\frac{1}{q}}},
  \]
  and calculate
  $$
    \lambda^{k}=   \frac{\alpha}{p}\int_{\Omega} |\nabla u_{k} |^{p} dx  + \frac{\beta}{q}\int_{\Omega} |\nabla v_{k}|^{q} dx.
  $$
\item
  If $k \geq 1$ and $ |\lambda^{k}-\lambda^{k-1}| < \varepsilon$, then stop.
\item
  Otherwise, solve the following decoupled systems:
  \begin{alignat}{2} \label{11e}
  -\Delta_{p} u &= \lambda^{k}  u_{k}^{\alpha-1}  v_{k}^{\beta} &\quad &\text{in } \Omega,
  \nonumber
  \\
  -\Delta_{q} v &=  \lambda^{k}  u_{k}^{\alpha} v_{k}^{\beta-1} &\quad &\text{in } \Omega,
  \\
  u&=v=0 &\quad &\text{on }\partial\Omega,
  \nonumber
  \end{alignat}
  set $k=k+1$, and go to the  step  (2).
\end{enumerate}
\end{mdframed}
\[
\]

Algorithm 1 computes in every iteration an approximation $\lambda^k$ of the principal eigenvalue
$\lambda(p,q)$ of \eqref{mainsys}. The computed pair of functions $(u_k, v_k)$ approximates the corresponding pair of eigenfunctions. Note that functions $(u_k, v_k)$ are
normalized in every iteration such that  $\int_{\Omega} u_{k}^{\alpha}v_{k}^{\beta} dx=1$.
The algorithm stops, when the distance between two successive approximate eigenvalues is less than a given tolerance $\varepsilon.$

\begin{rmk}
	In view of  Theorem \ref{p=q}, we know that \eqref{mainsys}  is reduced to  scalar equation \eqref{plapeq}  with Dirichlet  boundary conditions when $p=q$. Algorithm 1 in this case reduces to  the algorithm  developed by the first author in \cite{farid} where the   convergence of the iterative scheme to
	the first eigenfunction and the related eigenvalue  has been shown.
\end{rmk}

\begin{rmk}
	In \cite{RE}  two  methods for approximate minimizers of the abstract Rayleigh
	quotient $\frac{\Phi(u)}{\| u\|^{p}}$ have been presented. The functional
        $\Phi$ is assumed there to be strictly convex on a Banach space with
	norm $\|\cdot\|$ and  positively homogeneous of degree $p \in (0,   \infty)$.
	These methods, however,  are not applicable  to calculate the principal eigenvalue of
	\eqref{mainsys} since $ \mathcal{R}( t^{1/p} u, t^{1/q} v)= \mathcal{R}( u,  v). $ 	 	
\end{rmk}

Now we  discuss  the interesting possibility of extending Algorithm 1 to  a class of  quasilinear elliptic systems called gradient systems. These systems have been studied widely in the past decade, see \cite{Arruda} and the reference therein.
Gradient systems are of the following general form:	
\begin{equation}\label{f6}
\left\{
\begin{array}{lrl}
- \Delta_{p} u =\lambda     F_{u}(x,u,v) &    \text{in } \Omega, \\
- \Delta_{q} v =\lambda   F_{v}(x,u,v)   &   \text{in } \Omega,\\
u=v=0     &   \text{on }   \partial\Omega,\\
\end{array}
\right.
\end{equation}
where  $1 < p, q < \infty $ and {$F_u,\:F_v$ denotes  partial derivatives}. The nonlinearity $F: \Omega\times \mathbb{R} \times \mathbb{R} \rightarrow  \mathbb{R}$
is a $C^1$- function,	 satisfying $F(x, 0, 0) = 0$ and
\begin{itemize}
	\item   $F(x, t, s) = F(x, 0, s),\quad \forall\: x\in \Omega,\quad s\in\mathbb{R}\quad\text{and}\quad t\leq 0,	 	$
	\item  $F(x, t, s) = F(x, t, 0),\quad \forall \: x\in \Omega,\quad t\in\mathbb{R}\quad\text{and}\quad s\leq 0,	 	$
	\item   $\left| F_{t}(x, t,s)\right| \le c\left(1+ |t|^{p-1} + |s|^{q\frac{p-1}{p}} \right),$
	\item   $ \left| F_{s}(x, t,s) \right| \le  c\left(1+ |s|^{q-1} + |t|^{p \frac{q-1}{q}} \right)$
\end{itemize}
for all $(x,s,t)\in \Omega \times \mathbb{R} \times \mathbb{R}$.
Under these growth conditions on $F$, the principal eigenvalue is the minimum value of the following Rayleigh quotient
$$
\mathcal{R}(u,v)= \frac{\frac{1}{p}\int_{\Omega} |\nabla u(x)|^{p} dx  + \frac{1}{q}\int_{\Omega} |\nabla v(x)|^{q} dx}{\int_{\Omega} F(x,u(x), v(x)) dx},
$$
over the set
\[
\mathcal{A}={\{(u, v) \in  W^{1,p}_{0}(\Omega) \times  W^{1,q}_{0}(\Omega): \int_{\Omega} F(x,u(x), v(x)) \,  dx {>} 0 }\}.
\]

Now, Algorithm 1  can be easily modified to compute the principal eigenvalue of \eqref{f6}. To this aim, we just  replace  decoupled system \eqref{11e} by decoupled system
\begin{equation}\label{quasisysalg}
\left \{
\begin{array}{lrl}
- \Delta_{p} u =\lambda^{k} F_u(x,u_{k},v_{k})    &    \text{in } \Omega, \\
- \Delta_{q} v =\lambda^{k}  F_v(x,u_{k},v_{k})     &   \text{in } \Omega,\\
u=v=0	 																												&   \text{on } \partial \Omega,
\end{array}
\right.
\end{equation}
and {normalize its}  solution {as}
\[
u_{k+1}=\frac{u}{ \left( \int_{\Omega} F(x,u(x), v(x)) \,  dx \right)^{\frac{1}{p} } },
\]

\[
v_{k+1}=\frac{v}{ \left( \int_{\Omega} F(x,u(x), v(x)) \,  dx \right)^{\frac{1}{q} }    }.
\]
{Performed} numerical tests {indicate} that the extended algorithm  is convergent and efficiently approximates the {principal} eigenvalue.

	 \section{Numerical implementation}
	 This section provides several
	 examples in order to illustrate the efficiency of Algorithm~1.
	 At iteration $k$ we  solve decoupled nonlinear elliptic system \eqref{11e} by the finite element method with piecewise
	 linear basis functions. The resulting discrete system of nonlinear equations is solved by a modified Newton-Raphson method. Using a tolerance on the level of the machine precision and a suitable initial approximation, the Newton-Raphson method converges in at most $50$ iterations for all examples below.
	 Similarly, the tolerance in the fourth step of Algorithm 1 was chosen as $ \varepsilon=5 \times 10^{-5}$ and in all the following examples the algorithm converges in less than 10 iterations.
	
	 \begin{exam}\label{circle}
	Let   $ \Omega$   be  the unit disc  centred at origin.  The radial symmetry then enables us to use polar coordinates $u = u(r),\: v=v(r)$, $0< r < 1$ and to transform system \eqref{mainsys} to one dimensional system
	 \begin{equation}\label{mainsyspolar}
	 \left\{
	 \begin{array}{lrl}
	 - (r|u^\prime|^{p-2}u^\prime)^\prime =\lambda r |u|^{\alpha-1}  |v|^{\beta-1} v  &    \text{in } (0,1), \\
	 - (r|v^\prime|^{q-2}v^\prime)^\prime =\lambda   r |u|^{\alpha-1} |v|^{\beta-1} u  &   \text{in } (0,1),\\
	 u^\prime(0)=v^\prime(0)=0\quad u(1)=v(1)=0.
	 \end{array}
	 \right.
	 \end{equation}
	 Note that all finite element computations in the interval $(0,1)$ are performed with 500 elements (subintervals) of the same length.
	
	Let us start with a simple test case $p=q=2$. For this choice and arbitrary values of $\alpha$ and $\beta$ satisfying \eqref{alfa}, system \eqref{mainsys} reduces to a scalar  eigenvalue problem  for the standard Laplace operator.
	The principal eigenvalue of Laplacian in the unit disc with zero Dirichlet boundary conditions is the square of the first zero of the Bessel function $J_0$, i.e., $\lambda_1 \approx 5.7832$.
	The sequence $\{\lambda^k\}$ computed by Algorithm~1 applied to system \eqref{mainsyspolar} converges to this value and Table~\ref{lapeig} illustrates the speed of this convergence for initial guess $u_0 = v_0 = (1-r)^2$ and parameter values
	$\alpha = 1$ and $\beta = 1$.

	Similarly, both sequences $\{u^k\}$ and $\{v^k\}$ computed by Algorithm~1 converge to the  normalized first eigenfunction of the Laplacian
	$$w_1(r) = J_0\left(\lambda_1^{1/2} r \right)/ \left\| J_0\left(\lambda_1^{1/2} r \right) \right\|_{L^2(\Omega)}.$$
	
	Table~\ref{lapeig} shows the speed of this convergence and indicates that it is uniform.
	 \begin{table}[t]
	 	\caption{The convergence to the principal eigenvalue $\lambda_1 \approx 5.78318$ and to the corresponding eigenfunction $w_1$ computed
	 		by Algorithm~1 for the unit disc with $p=q=2$, $\alpha = 1$ and $\beta = 1$.  }
	 	\begin{center}
	 		\begin{tabular}{|c|c|c|c|}
	 			\hline	
	 			$k$ & $\lambda^k$    & $|\lambda_1 - \lambda^k|/\lambda_1$ & $\| u_k -w_1 \|_{L^\infty(\Omega)} $ \\ \hline
	 			0   & 10.0000        & 0.7291 & 1.0764  \\
	 			1   & 5.9232        & 0.0242 & 0.1405  \\
	 			2   & 5.7882        & 0.0008 & 0.0246  \\
	 			3   & 5.7834       & 0.0000 & 0.0045  \\
	 			4   & 5.7832        & 0.0000 & 0.0005  \\
	 			5   & 5.7832               & 0.0000   & 0.0001  \\
	 			\hline	
	 		\end{tabular}
	 		\label{lapeig}
	 	\end{center}
	 \end{table}
		

	 As a second test, we choose $p=q$ and consider large values of $p$. In this case system \eqref{mainsys} reduces to the scalar equation \eqref{plapeq} and it is known \cite[Lemma 11]{lind1} that $\lambda(p)^{1/p}$ converges to $\Lambda_\infty = 1$ when $p\rightarrow \infty$. We verify this fact numerically by applying Algorithm~1 to system \eqref{mainsyspolar} with $u_0=\cos(\pi r/2), v_0=\sin(\pi(r+1)/2)$.

	 The results are reported in Table \ref{scaleqlam} and confirm the expectations although the convergence is not as fast as in the previous test.
         Similarly, the corresponding eigenfunctions are known to converge to the distance function $d(x) = \operatorname{dist}(x,\partial\Omega)$. Figure \ref{cir-plapfig} shows the eigenfunctions computed by Algorithm~1 for three different values of $p$ and confirms the expected convergence.

	 \begin{table}[ht]
	 	\caption{ The principal eigenvalue $\lambda(p)$ of $p$-Laplacian in the unit circle computed by Algorithm~1 applied to system \eqref{mainsyspolar} with $p=q$.}
	 	\begin{center}
	 		
	 		\begin{tabular}{|l|l|l|c|}
	 			\hline	
	 			$p$   & $\lambda(p)$    & $\lambda(p)^{1/p}$  & $\| u(x) -d(x) \|_{L^\infty(\Omega)} $\\ \hline
	 			1.3   & 3.2660         & 2.5205             & 0.3864\\
	 			6     & 26.832         & 1.7301             & 0.4316\\
	 			18    & 166.02         & 1.3284             & 0.2605\\
	 			30    & 415.90         & 1.2226             & 0.1864\\
	 			100   & 4026.9         & 1.0865             & 0.0772\\
	 			200   & 15498          & 1.0494            & 0.0449\\
	 			300   & 34363          & 1.0354             & 0.0325\\
	 			400   & 60610          & 1.0279             & 0.0257\\
	 			\hline	
	 		\end{tabular}
	 		\label{scaleqlam}
	 	\end{center}
	 \end{table}

	 \begin{figure}[ht]
	 	\centering
	 	\subfloat[ $p=1.3$]{	\includegraphics[width=0.34\textwidth]{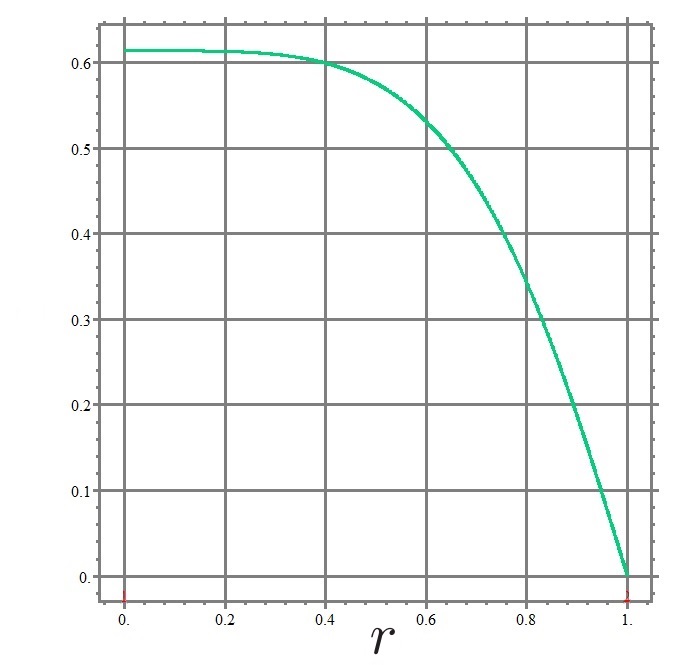}}
	 	\subfloat[$p=6$]{	\includegraphics[width=0.33\textwidth]{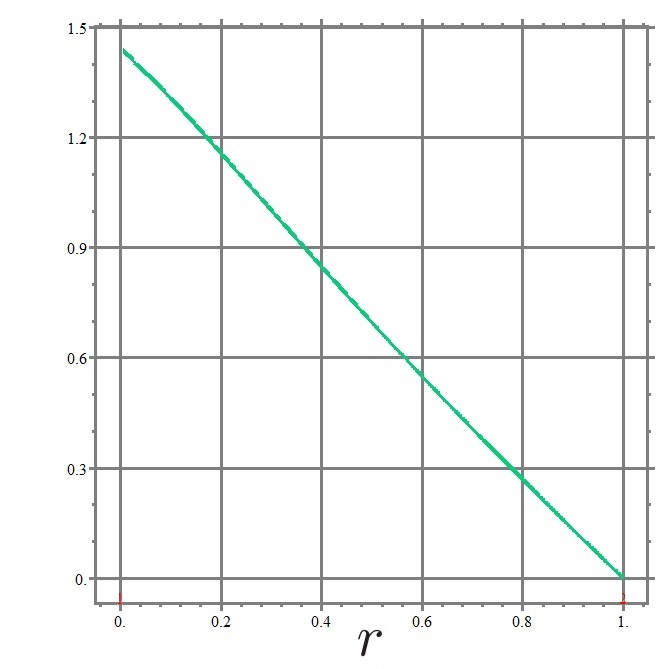}}
	 	\subfloat[$p=400$]{	\includegraphics[width=0.34\textwidth]{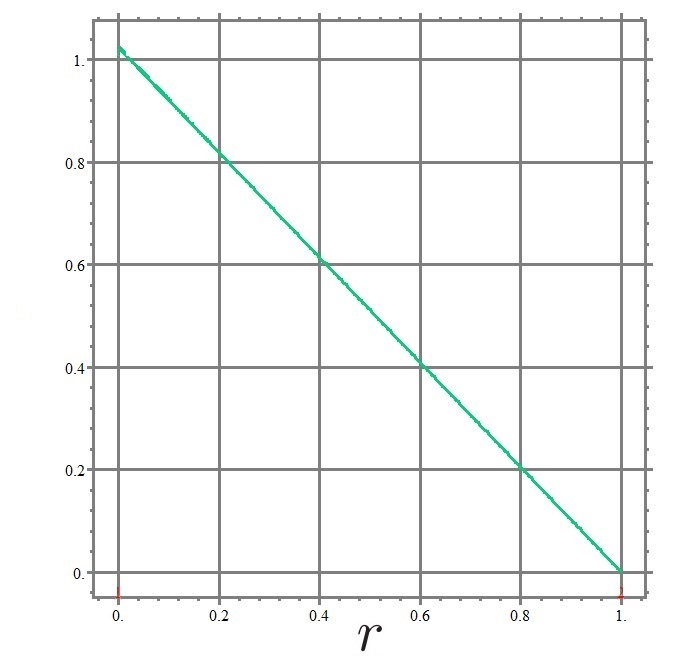} }
	 	\caption{Radial parts of eigenfunctions corresponding to the principal eigenvalue $\lambda(p)$ of $p$-Laplacian in the unit disc for ${p=1.3, 6, 400}$.
	 		 	}
	 	\label{cir-plapfig}
	 \end{figure}

	 In the third case, we consider $p$ not equal to $q$. The performance of Algorithm~1 applied to system \eqref{mainsyspolar}
	 with the initial guess $u_0=v_0=w_1$ is shown in Table~\ref{cirdifpq} for various values of $p$, $q$, $\alpha$, and $\beta$. As an example, the pair  of computed eigenfunction  $(u,v)$  corresponding to $\lambda (30,2)$ is presented in Figure \ref{cir-302-eigf}.
	
	 \begin{table}[h]
	 	\caption{ The principal eigenvalue $\lambda(p,q)$ of system \eqref{mainsys} in the unit circle computed by Algorithm~1 applied to system \eqref{mainsyspolar}. }
	 	\begin{center}
	 		
	 		\begin{tabular}{|l|l|l|l|l|l|}
	 			\hline	
	 			$p$   & $q$    & $\alpha$   & $\beta$  & $\lambda(p,q)$    \\ \hline
	 			30    & 1.5    &  1         & 1.4500   & 7.4486            \\
	 			30    & 2      &  1         & 1.9333   & 9.5034            \\
	 			30    & 5      &  1         & 4.8333   & 25.656            \\
	 			30    & 7      &  1         & 6.7666   & 40.194            \\
	 			30    & 10     &  1         & 9.6666   & 67.562            \\
	 			30    & 13     &  1         & 12.566   & 101.51            \\
	 			30    & 15     &  1         & 14.500   & 127.77            \\
	 			30    & 17     &  1         & 16.433   & 156.92            \\
	 			30    & 20     &  1         & 19.333   & 206.01            \\
	 			30    & 25     &  1         & 24.166   & 302.09             \\
	 			\hline	
	 		\end{tabular}
	 		\label{cirdifpq}
	 	\end{center}
	 \end{table}
	 \begin{figure}[ht]
	 	\centering
	 	\subfloat[ ]{	\includegraphics[width=0.4\textwidth]{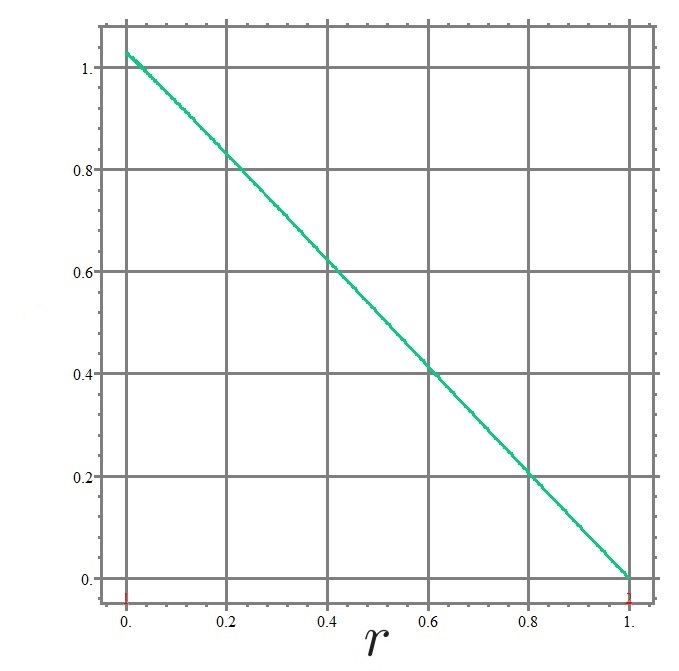}}
	 	\subfloat[]{	\includegraphics[width=0.4\textwidth]{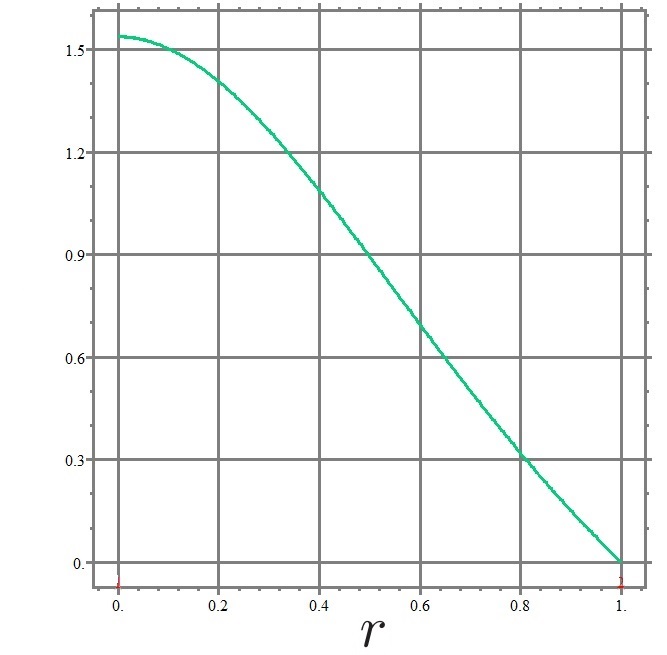}}
	 	\caption{{Radial parts of eigenfunctions $u$ (left) and $v$ (right) corresponding to $\lambda (30,2)$ with $\alpha=1$ and $\beta=1.9333$ for the unit disc.}}
	 	\label{cir-302-eigf}
	 \end{figure}
	\end{exam}

	 \begin{exam}\label{square-tri}
         In this example, we consider the square domain
	 $\Omega=\{(x_1,x_2)\in \mathbb{R}^2:\: 0< x_1< 2,\:  0< x_2< 2 \}$.
	 The principal eigenvalue of system \eqref{mainsys} corresponds to its main frequency and
	 we apply Algorithm 1 to determine it for different values of $p$ and $q$.
         Table~\ref{rectable} lists the resulting principal eigenvalues for a mesh with 2390 elements, 4904 nodes, and mesh size $h=1/16=0.0625$.
	 \begin{table}[h!]
	 	\caption{ Principal eigenvalues of \eqref{mainsys} in the square.}
	 	\begin{center}
	 		
	 		\begin{tabular}{|l|l|l|c|c|}
	 			\hline	
	 			$p$   & $q$    & $\alpha$  & $\beta$   & $\lambda(p,q)$       \\ \hline
	 			10    & 1.5    & 1         & 1.35      & 6.0294\\
	 			10    & 2      & 1         & 1.80      & 7.3695\\
	 			10    & 3      & 1         & 2.70      & 10.173\\
	 			10    & 4      & 1         & 3.60      & 13.183\\
	 			10    & 5      & 1         & 4.50      & 16.391\\
	 			10    & 6      & 1         & 5.60      & 19.788\\
	 			10    & 7      & 1         & 6.30      & 23.356\\
	 			10    & 8      & 1         & 7.20      & 27.076\\
	 			10    & 9      & 1         & 8.10      & 30.957\\
	 			10    & 10     & 1         & 9.00      & 34.999\\
	 			\hline	
	 		\end{tabular}
	 		\label{rectable}
	 	\end{center}
	 \end{table}
	 A typical pair of eigenfunctions $(u,v)$ computed by Algorithm~1 is illustrated in Figure \ref{receigf} for $p=10$, $q=2$, $\alpha=1$, and $\beta=1.8$.
	 \begin{figure}[h!]
	 	\centering
	 	\subfloat[ $u$]{	\includegraphics[width=0.4\textwidth]{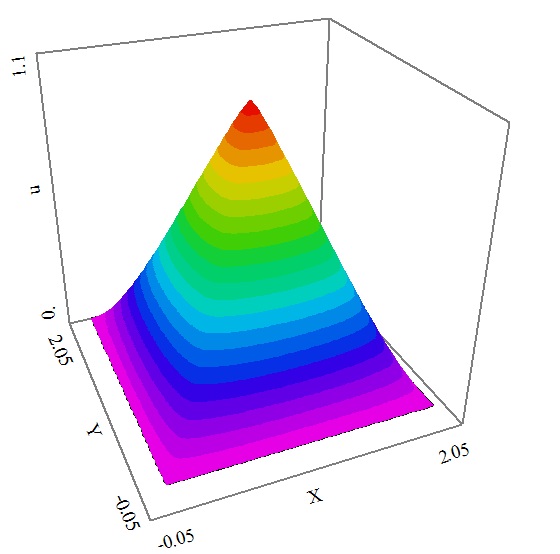}}
	 	\subfloat[$v$]{	\includegraphics[width=0.4\textwidth]{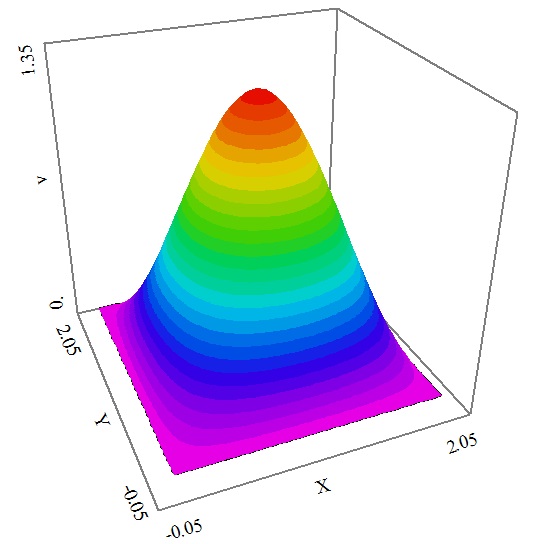}}
	 	\caption{{Eigenpair $(u,v)$  corresponding to $ \lambda (10,2)$ with ${\alpha=1, \: \beta=1.8}$.}}
	 	\label{receigf}
	 \end{figure}

	Further, to demonstrate the accuracy of computed approximations we test the order of convergence of the used finite element method. We solve the problem on a sequence of successively refined meshes and we compute the experimental order of convergence $EOC$ using the
	ratio of successive differences of eigenvalues for different mesh sizes as
	$$
	  EOC = \log_2\left|\frac{\lambda_h-\lambda_{\frac{h}{2}}}{\lambda_{\frac{h}{2}}-\lambda_{\frac{h}{4}}}\right|.
	$$
	We present it
	in Table~\ref{ratecon-p-10-q-5} for $p=10$, $\alpha=1$, $q=1.5, 5$, and $10$. The corresponding values of $\beta$ are given by \eqref{alfa} to be $1.35$, $4.50$, and $9$, respectively.
	 \begin{table}[h!]
	 \caption{{ Experimental orders of convergence $EOC$ for the square domain with $p=10$, $\alpha=1$, and $q=1.5, 5$, and $10$.
	}}
	 \begin{center}
	 \begin{tabular}{|r|c|c|c|c|c|c|c|}
	 	\hline
	 	$n$   & $h=\frac{1}{2^n}$ & $\lambda_{h}(10,1.5)$ & $EOC$ & $\lambda_{h}(10,5)$ & $EOC$ & $\lambda_{h}(10,10)$ & $EOC$ \\ \hline
	 	$0$   & 1               & 6.48002 & 3.2308 & 20.9681 & 2.0802 & 69.4507 & 2.1857\\
	 	$1$   & $\frac{1}{2}$   & 6.08031 & 2.5340 & 17.1136 & 2.1573 & 42.3979 & 2.3063\\
	 	$2$   & $\frac{1}{4}$   & 6.03774 & 2.9234 & 16.5020 & 3.7281 & 36.4513 & 2.2662\\
	 	$3$   & $\frac{1}{8}$   & 6.03039 &        & 16.4064 & 3.0514 & 35.2491 & 2.7465\\
	 	$4$   & $\frac{1}{16}$  & 6.02942 &        & 16.3910 &        & 34.9992 &       \\
	 	$5$   & $\frac{1}{32}$  &         &        & 16.3891 &        & 34.9619 &       \\	
	 	\hline
	 \end{tabular}
	 \label{ratecon-p-10-q-5}
	 \end{center}
	 \end{table}
        The expected order of convergence is two. The higher experimental orders of convergence observed in Table~\ref{ratecon-p-10-q-5} probably indicate the preasymptotic regime. On finer meshes the experimental order of convergence will probably decrease to values around two.

	\end{exam}
	 \begin{exam}\label{nonconvex}
	 Here we test Algorithm~1 for three other domains, namely for
	 an isosceles triangle, L-shaped domain, and a heart shaped domain.
	 Note that L-shaped and heart shaped domains are non-convex and singularities of eigenfunctions are expected in re-entrant corners.
	 To be more specific, the isosceles triangle has base 1 and altitude 1,
	 the L-shaped domain is $(0,3)^2\setminus[1,3]^2$, 	
	 and the heart shaped domain is $H=H_1\cup H_2\cup H_3$
	 where
	 \begin{align*}
	 	H_1&=\{(x,y)\in \mathbb{R}^2:\quad (x-1)^2+(y^2/4)<1, y\geq 0\},\\
	 	H_2&=\{(x,y)\in \mathbb{R}^2:\quad (x+1)^2+(y^2/4)<1, y\geq 0\},\\
	 	H_3&=\{(x,y)\in \mathbb{R}^2:\quad ({x^2}/{4})+({y^2}/{16})<1, y \leq 0\}.
	 \end{align*}

        Table \ref{tritable} lists the principal eigenvalues computed by Algorithm 1 for these three domains and different values of $p$ and $q$. The mesh size in all three cases was $h=1/16=0.0625$.
               For illustration we also present eigenfunctions $(u,v)$ corresponding to the principal eigenvalue $\lambda(3,10)$ and $\alpha=1$ and $\beta=6.6666$ in Figures~\ref{trieigf}--\ref{hearteigf} 
        for the isosceles triangle, L-shaped domain, and the heart shaped domain, respectively.

	 \begin{table}[ht]
	 \caption{ The principal eigenvalue for the isosceles triangle, L-shaped domain and heart shaped domain.}
	 \begin{center}	
	 \begin{tabular}{|c|c|c|c|c|c|c|}
	 \hline
	      &        &           &           & \multicolumn{3}{|c|}{$\lambda(p,q)$}\\
	 $p$  & $q$    & $\alpha$  & $\beta$   & triangle           & L-shape& heart \\ \hline
	 3    & 2      & 1         & 1.3333    & $7.9822\times10^1$ & 12.914 & 1.3330\\
	 3    & 3      & 1         & 2.0000    & $2.2725\times10^2$ & 23.632 & 1.1917\\
	 3    & 4      & 1         & 2.3333    & $6.1966\times10^2$ & 41.713 & 1.0268\\
	 3    & 5      & 1         & 3.3333    & $1.6384\times10^3$ & 71.810 & 0.8607\\
	 3    & 6      & 1         & 4.0000    & $4.2386\times10^3$ & 121.29 & 0.7061\\
	 3    & 7      & 1         & 4.6666    & $1.0778\times10^4$ & 201.73 & 0.5692\\
	 3    & 8      & 1         & 5.3333    & $2.7038\times10^4$ & 331.44 & 0.4523\\
	 3    & 9      & 1         & 6.0000    & $6.7066\times10^4$ & 539.05 & 0.3557\\
	 3    & 10     & 1         & 6.6666    & $1.6479\times10^5$ & 862.16 & 0.2766\\
	 \hline	
	 \end{tabular}
	 \label{tritable}
	 \end{center}
	 \end{table}

%
	 \begin{figure}[h]
	 	\centering
	 	\subfloat[ $u$]{	\includegraphics[width=0.4\textwidth]{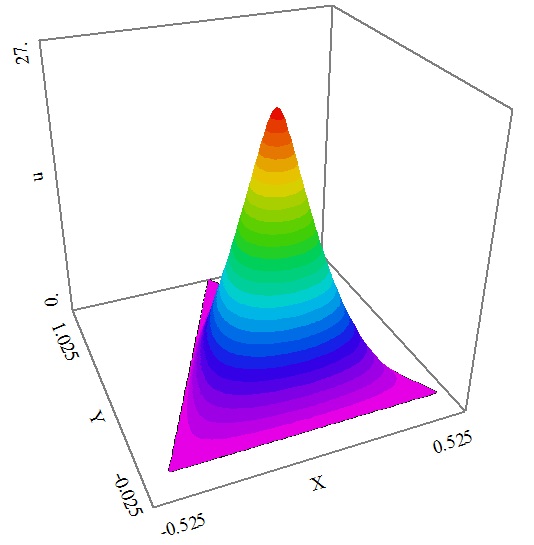}}
	 	\subfloat[$v$]{	\includegraphics[width=0.4\textwidth]{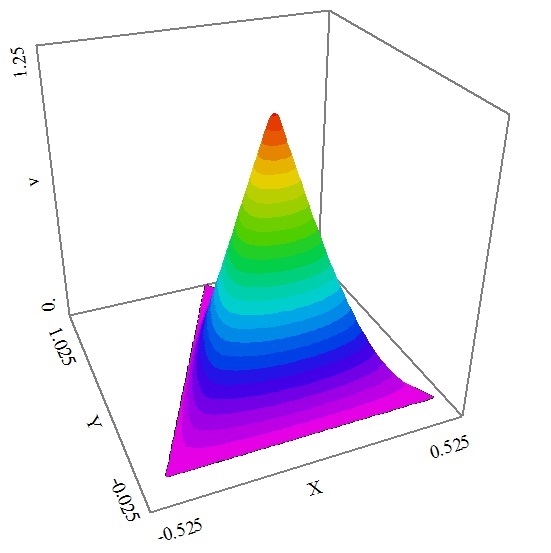}}
	 	\caption{{Eigenfunctions $u$ (left) and $v$ (right) corresponding to $\lambda (3,10)$ with ${\alpha=1}$ and $\beta=6.6666$ for the isosceles triangle.}}
	 	\label{trieigf}
	 \end{figure}
	 \begin{figure}[h]
	 	\centering
	 	\subfloat[ $u$]{	\includegraphics[width=0.4\textwidth]{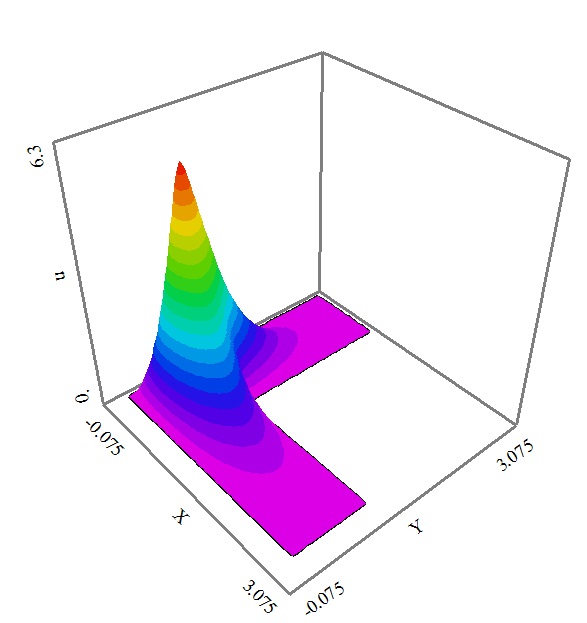}}
	 	\subfloat[$v$]{	\includegraphics[width=0.4\textwidth]{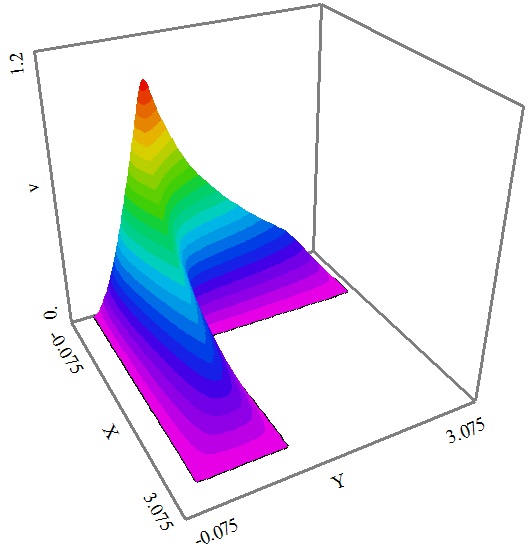}}
	 	\caption{{Eigenfunctions $u$ (left) and $v$ (right) corresponding to $\lambda (3,10)$ with ${\alpha=1}$ and $\beta=6.6666$ for the L-shaped domain.}}
	 	\label{lshapeeigf}
	 \end{figure}
	 \begin{figure}[h]
	 	\centering
	 	\subfloat[ $u$]{	\includegraphics[width=0.4\textwidth]{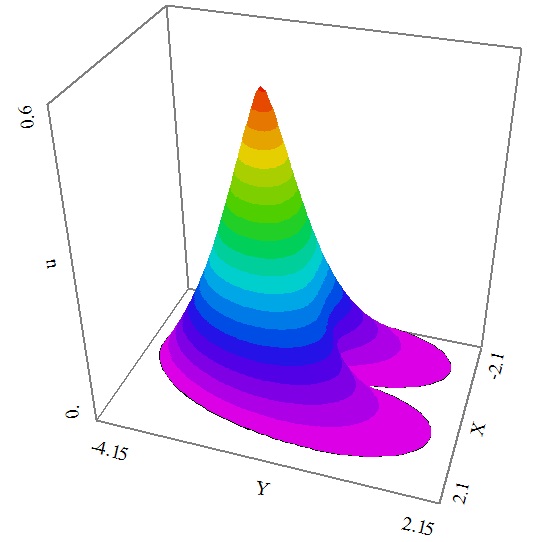}}
	 	\subfloat[$v$]{	\includegraphics[width=0.4\textwidth]{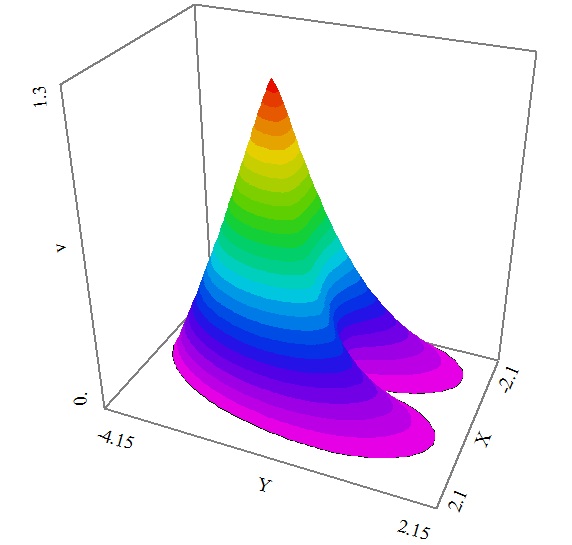}}
	 	\caption{{Eigenfunctions $u$ (left) and $v$ (right) corresponding to $ \lambda (3,10)$ with ${\alpha=1}$ and $\beta=6.6666$ for the heart shaped domain.}}
	 	\label{hearteigf}
	 \end{figure}
	\end{exam}

	  \begin{exam}\label{napoli}
	        In order to present the usage of Algorithm 1 for a more general quasilinear system as it was proposed at the end of Section~\ref{numeralgo}, we consider in this example a resonant quasilinear system of the following form
	  	\begin{equation}\label{napolisys}
	  	\left\{
	  	\begin{array}{lrl}
	  	- \Delta_{p} u =\Lambda(p,q) r(x) \alpha |u|^{\alpha-2} u |v|^{\beta}   &    \text{in } \Omega, \\
	  	- \Delta_{q} v =\Lambda(p,q) r(x) \beta |u|^{\alpha}  |v|^{\beta-2} v   &   \text{in } \Omega,\\
	  	u=v=0 &   \text{on } \partial\Omega,
	  	\end{array}
	  	\right.
	  	\end{equation}
	  	where $r\in L^\infty (\Omega)$ is a strictly positive function, $r(x) \geq m>0$.
	        This system has been studied intensively by several authors, see e.g. \cite{boccardo, bonder,Pezzo,napoli} to list just a few references.
	
	        The resonant quasilinear system \eqref{napolisys} differs from \eqref{mainsys} and has certain specific properties. For instance, in contrast to \eqref{mainsys}, the solution of system \eqref{napolisys} does not satisfy $u=v$ for $p=q$. However, Algorithm 1 with above mentioned generalizations can be successfully used to compute its principal eigenvalues and eigenfunctions.
	
	        For illustration we consider the square ${\Omega=\{(x_1,x_2)\in \mathbb{R}^2:\: 0< x_1< 2,\:  0< x_2< 2 \}}$ and function
	  		\begin{equation*}
	  		r(x_1, x_2)=	\left\{
	  			\begin{array}{lrl}
	  				1   &  \text{for }  0 < x_1 \leq 1, \\
	  				2   &  \text{for }  1 < x_1 < 2.\\
	  				  			\end{array}
	  			\right.
	  		\end{equation*}
	  	Principal eigenvalues $\Lambda(p,q)$ (corresponding to main frequencies) of system
	  	\eqref{napolisys} computed by the generalized Algorithm 1 for different values of $p,q,\alpha$ and $\beta$ are listed in Table~\ref{napolitable}. The used mesh is the same as in Example~\ref{square-tri}.
	  	  	\begin{table}[ht]
	  		\caption{The principal eigenvalue of the resonant quasilinear system \eqref{napolisys} on a square. }
	  		\begin{center}
	  			
	  			\begin{tabular}{|l|l|l|l|c|c|}
	  				\hline	
	  				$p$  & $q$    & $\alpha$  & $\beta$   & $\Lambda(p,q)$ & \text{Upper bound \eqref{upes}}       \\ \hline
	  				10    & 2      & 1         & 1.80      & 4.6239 & 24.1474\\
	  				10    & 3      & 1         & 2.70      & 4.3660 & 31.2932\\
	  				10    & 4      & 1         & 3.60      & 4.3459 & 32.9794\\
	  				10    & 5      & 1         & 4.50      & 4.4108 & 29.1125\\
	  				10    & 6      & 1         & 5.40      & 4.5119 & 22.1799\\
	  				10    & 7      & 1         & 6.30      & 4.6327 & 15.0333\\
	  				10    & 8      & 1         & 7.20      & 4.7612 & 9.4046\\
	  				10    & 9      & 1         & 8.10      & 4.8968 & 5.7214\\
	  				10    & 10     & 1         & 9.00      & 5.0362 & ---
	  				\\
	  				\hline	
	  			\end{tabular}
	  			\label{napolitable}
	  		\end{center}
	  	\end{table}
	  	
     We note that in \cite{boccardo,napoli}, the following upper bound on the first eigenvalue of  system \eqref{napolisys} with $p>q$ has been found:
     \begin{equation}\label{upes}
     \Lambda(p,q) \leq \frac{\Lambda(p)}{p}+\frac{m^{-1+q/p}}{q}\left(\frac{p}{q}\right)^q (\Lambda(p))^{q/p},
     \end{equation}
     where $\Lambda(p)$ is the first eigenvalue of the Dirichlet weighted $p$-Laplace eigenvalue problem
     \begin{equation}\label{wplap}
       - \Delta_{p} u =\Lambda(p) r(x) |u|^{p-2}  u  \quad    \text{in } \Omega.
     \end{equation}
     Numerical results presented in the last column of Table~\ref{napolitable} show that upper bound \eqref{upes} may considerably overestimate the true eigenvalue for some values of parameters $p,q,\alpha$ and $\beta$.

     Numerical results presented in Table~\ref{napolitable} also show that the lower bound derived in Theorem \ref{lowerest} for the first eigenvalue of \eqref{mainsys} cannot be straightforwardly generalized to system \eqref{napolisys}. For example, the value $\Lambda(10,4)=4.3459$ from Table~\ref{napolitable} is not above $\Lambda(4)=5.7534$ nor $\Lambda(10)=18.1873$.


	  \end{exam}


	 \section{Conclusions}
	 In this paper,  an elliptic eigenvalue system involving the $p$-Laplace operator has been considered. The principal eigenvalue and corresponding eigenfunctions of the system have been investigated both analytically and numerically.
	  We have provided an alternative proof for the simplicity of the principal eigenvalue and we have shown that this system reduces to the $p$-Laplace eigenvalue problem for a special choice of parameters.
	  Further, we developed a numerical algorithm in order to compute approximate principal eigenvalues and corresponding eigenfunctions. We showed how to generalize  this algorithm for gradient type systems.
	  The convergence of this algorithm was verified numerically for various examples, but
	  an analytical proof of convergence seems to be an interesting and difficult mathematical problem.
	

	\section*{Acknowledgements}
	{\footnotesize
	F. Bozorgnia   was  supported by the Portuguese National Science Foundation through FCT fellowships SFRH/BPD/33962/2009.
	  T. Vejchodsk\'y gratefully acknowledges the support of Neuron Fund for Support of Science, project no.~24/2016 and the institutional support RVO 67985840.
	}  

	 \providecommand{\bysame}{\leavevmode\hbox to3em{\hrulefill}\thinspace}
	 \providecommand{\MR}{\relax\ifhmode\unskip\space\fi MR }
	 \providecommand{\MRhref}[2]{%
	 	\href{http://www.ams.org/mathscinet-getitem?mr=#1}{#2}
	 }
	 \providecommand{\href}[2]{#2}

\end{document}